\newcommand{\SL}{\textup{SL}}
\newcommand{\GL}{\textup{GL}}
\newcommand{\ch}{\textup{ch}}
\DeclareMathOperator{\Sort}{sort}
\renewcommand{\ge}{\geqslant}
\renewcommand{\le}{\leqslant}
\newcommand{\Qbinom}[2]{\genfrac{[}{]}{0pt}{}{#1}{#2}}
\newcommand{\qbinom}[2]{\genfrac{\llbracket}{\rrbracket}{0pt}{}{#1}{#2}}
\DeclareMathOperator{\LR}{LR}
\DeclareMathOperator{\SSYT}{SSYT}
\newcolumntype{C}{{>{\centering\arraybackslash$}p{2.7em}<{$}}}
\DeclareMathOperator{\Out}{Out}
\title[Schur log-concavity and the quantum Pascal triangle]{Schur log-concavity and\\ the quantum Pascal triangle}
\author{\'Alvaro Guti\'errez and Christian Krattenthaler}
\address{University of Bristol, Bristol, UK}
\email{a.gutierrezcaceres@bristol.ac.uk}
\address{Universit\"at Wien, Vienna, Austria}
\email{Christian.Krattenthaler@univie.ac.at}
\date{\today}
\begin{document}

\begin{abstract}
We say a sequence $f_0, f_1, f_2, \ldots$ of symmetric functions is \emph{Schur log-concave} if $f_n^2 - f_{n-1}f_{n+1}$ is Schur positive for all $n\ge1$. We conjecture that a very general class of sequences of Schur functions
satisfies this property, and show it for sequences of Schur functions indexed by partitions with growing first part and column. Our findings are related to work of
Lam, Postnikov and Pylyavskyy on Schur positivity,
and of Butler, Sagan, and the second author on $q$-log-concavity.
\end{abstract}

\maketitle

\section{Introduction}

We say a sequence $f_0, f_1, f_2, \ldots$ of symmetric functions is \emph{Schur log-concave} if $f_n^2 - f_{n-1}f_{n+1}$ is Schur positive for all $n\ge1$. It is \emph{strongly Schur log-concave} if $f_nf_{n+i} - f_{n-1}f_{n+i+1}$ is Schur positive for all $i\ge0$ and $n\ge1$.
We put forward the following conjecture. 
(We refer the reader to \S\ref{sec:preliminaries} for all terminology
and notation.)
\begin{conjecture}\label{main conjecture}
    Let $\lambda, \beta$ be partitions, $\alpha$ be an integral vector.
    Suppose $\ell(\lambda) \gg \ell(\alpha)$ and $\lambda_{\ell(\lambda)} \gg \beta_1$. Then, the sequence
    \[
    s_{\lambda},~~
    s_{\lambda\cup\beta+\alpha},~~
    s_{\lambda\cup^2\beta+2\alpha}, ~~
    s_{\lambda\cup^3\beta+3\alpha}, ~~\ldots
    \]
    is strongly Schur log-concave.
\end{conjecture}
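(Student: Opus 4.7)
The plan is to reformulate the conjecture as an inequality between Littlewood--Richardson coefficients and then to build an explicit injection witnessing it, using the two large-gap hypotheses to decouple the effects of $\alpha$ and $\beta$.

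\textbf{Step 1: Structural decomposition.} Under the hypotheses $\ell(\lambda)\gg\ell(\alpha)$ and $\lambda_{\ell(\lambda)}\gg\beta_1$, every $\lambda_k:=\lambda\cup^k\beta+k\alpha$ splits into three regions separated by large vertical gaps: a \emph{top piece} consisting of the first $\ell(\alpha)$ rows of $\lambda$ shifted by $k\alpha$; a \emph{middle piece} equal to the remaining rows of $\lambda$; and a \emph{bottom piece} consisting of $k$ copies of $\beta$ in sorted order. Passing from $\lambda_k$ to $\lambda_{k+1}$ modifies only the top and bottom pieces, in disjoint zones of the shape.

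\textbf{Step 2: LR reformulation.} Schur positivity of $s_{\lambda_n}s_{\lambda_{n+i}}-s_{\lambda_{n-1}}s_{\lambda_{n+i+1}}$ is equivalent to
\[
c^{\nu}_{\lambda_n,\,\lambda_{n+i}} \;\ge\; c^{\nu}_{\lambda_{n-1},\,\lambda_{n+i+1}}
\qquad \text{for every partition } \nu.
\]
Passing from $(\lambda_n,\lambda_{n+i})$ to $(\lambda_{n-1},\lambda_{n+i+1})$ transfers exactly one copy of $\alpha$ and one copy of $\beta$ from the smaller partition to the larger, and by the large-gap hypotheses these two transfers act in disjoint regions. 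This places the conjecture squarely in the Lam--Postnikov--Pylyavskyy paradigm of ``balancing'' moves on pairs of partitions.

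\textbf{Step 3: Two independent injections.} I would construct the required injection from LR tableaux counted by $c^{\nu}_{\lambda_{n-1},\lambda_{n+i+1}}$ into those counted by $c^{\nu}_{\lambda_n,\lambda_{n+i}}$ as the composition of an \emph{$\alpha$-injection} acting on the top region and a \emph{$\beta$-injection} acting on the bottom region. Conjugation of partitions turns row-addition by $\beta$ into column-addition, and so exchanges the two transfers, so it should be enough to treat the pure $\beta$-case ($\alpha=\mathbf{0}$). For that case, I would induct on the number of distinct parts of $\beta$ via iterated Pieri expansions, reducing to $\beta$ a single row of length $b$ well below $\lambda_{\ell(\lambda)}$. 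That last case---strong Schur log-concavity of $s_\lambda,\,s_{\lambda\cup(b)},\,s_{\lambda\cup(b,b)},\ldots$---should follow either from an LGV/Jacobi--Trudi path-swap on two-determinant products or by specialising the $q$-log-concavity theorems of Butler, Sagan, and the second author cited in the abstract.

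\textbf{Main obstacle.} The LR word condition on a filling of $\nu/\lambda_{n-1}$ with content $\lambda_{n+i+1}$ intertwines entries coming from all three regions of the shape, so the proposed top $+$ bottom factorisation of the injection is not automatic. The key analytical content of the proof will be to show that the large-gap hypotheses force the reading word to be essentially block-triangular, so that a pair of locally valid LR modifications compose to a globally valid LR tableau. A secondary---but still delicate---difficulty is the single-row base case, where the existing $q$-log-concavity machinery must be leveraged at $q=1$ and translated into a statement about LR multiplicities indexed by an arbitrary ambient shape $\nu$.
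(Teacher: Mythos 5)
The statement you are addressing is a \emph{conjecture}, and the paper supplies no proof of it in general: Theorem~A only treats the extreme special case $\beta=(1^j)$, $\alpha=(k)$ (a single column and a single row). Your Steps~1--3, restricted to that case, do track the paper's argument: the LR reformulation, and an explicit injection $\LR^\theta_{\rho\delta}\hookrightarrow\LR^\theta_{\mu\nu}$ assembled from a ``horizontal'' move (insert $k$ copies of~$1$ in the first row, for the $\alpha$-shift) and a ``vertical'' move (insert the next $j$ new entries in the first column, for the $\beta$-shift), with the hypotheses $k\ge\lambda_2$, $j\ge\lambda_2'$ playing exactly the decoupling role you describe. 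So the structural picture you paint is on the right track for the proven case.

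For the full conjecture your plan has genuine gaps beyond the one you flag. First, the conjugation reduction does not let you take $\alpha=\mathbf{0}$: applying $\omega$ replaces the pair $(\alpha,\beta)$ by (roughly) $(\beta',\alpha')$, i.e.\ it swaps the two roles rather than killing one of them, so the mixed case with both nonzero and $\ell(\alpha),\ell(\beta)>1$ is untouched. Second, the ``iterated Pieri'' induction on the parts of $\beta$ does not obviously propagate log-concavity: Pieri rewrites each term $s_{\lambda\cup^k\beta}$ as a \emph{sum} of Schur functions, but log-concavity is a quadratic property of the sequence, and there is no termwise way to aggregate log-concavity of pieces of a Pieri expansion into log-concavity of the sum. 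Third, the proposed base case (``$\beta$ a single row'') is not what Theorem~A proves --- there $\beta=(1^j)$ is a single \emph{column}, and a single row $\beta=(b)$ with $1<b\le\lambda_{\ell(\lambda)}$ is genuinely outside Theorem~A; moreover the appeal to the $q$-log-concavity literature runs the implication backwards, since Lemma~\ref{lem:homo} only transports positivity \emph{from} $\Lambda$ \emph{to} $\ZZ[q,q^{-1}]$ and not the other way. Finally, the ``main obstacle'' you name --- that the LR Yamanouchi condition couples all three regions --- is precisely why the paper retreats to $\beta=(1^j)$, $\alpha=(k)$: in that case the new cells sit entirely in the first row and first column of the skew shape, where the Yamanouchi and semistandardness checks localize. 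No mechanism in your sketch replaces that localization for wider $\beta$ or longer $\alpha$, which is the unresolved content of the conjecture.
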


We adopt the convention that $s_{\gamma} = 0$
if $\gamma$ is not a partition. Hence, the sequence in the conjecture might be finite or infinite. 
For $|\lambda|\le6$ and $\ell(\lambda)\le3$, the conjecture holds for all tested $\alpha$ and $\beta$ such that $\ell(\lambda) > \ell(\alpha)$ and $\lambda_{\ell(\lambda)} \ge \beta_1$. If $\ell(\lambda) = \ell(\alpha)$, however, the claim is no longer true: consider e.g.~$\lambda = (3,3),~ \alpha = (1,1),~ \beta=(3)$.
\medskip

Our goal is to provide evidence supporting the conjecture. 
When $\lambda = \beta = \varnothing$ and $\ell(\alpha)=1$, 
the conjecture is true by the Jacobi--Trudi formula
(see Corollary~\ref{cor:h} below).
When $\lambda$ is a hook, $\beta=\varnothing$, and $\ell(\alpha) = 1$, 
the conjecture is essentially known ---it follows from either \cite[Thm.~3.1(1)]{BergeronBiagioliRosas}, \cite[Prop.~3.3]{BergeronMcNamara}, or \cite[Thm.~5]{Lam}.

More generally, we show Conjecture~\ref{main conjecture} for the case 
$\beta_1 \le 1$, $\ell(\alpha) \le 1$ of growing first part and first column.
\begin{mainthm}\label{main:first row}
The following sequence is strongly Schur log-concave:
\[s_{\lambda},~~
    s_{\lambda\cup(1^j)+(k)},~~
    s_{\lambda\cup(1^{2j})+(2k)},~~
    s_{\lambda\cup(1^{3j})+(3k)},~~ \ldots\] 
for any $k = 0$ or $k \ge \lambda_2$, and any $j = 0$ or $j\ge  \lambda'_2$.
\end{mainthm}

To our knowledge, Schur log-concavity of sequences has not been studied explicitly. However, the Schur positivity of expressions of the form $s_\mu s_\nu - s_\rho s_\delta$ has an extensive literature: see for instance \cite{Orellana, Lam} and the references therein. Although sequences are not usually discussed, some of these results can be reframed in our language. 
The aforementioned case when $\lambda$ is a hook,
    $\beta=\varnothing$ and $\ell(\alpha)=1$
 appears in this context as a particular case of a conjecture of \cite{FFLP}, which was ultimately established in \cite{Lam}. 

Symmetric polynomials are characters of $\GL_n$ representations.
A related problem is to study sequences of $\SL_n$ characters, which are the images of symmetric polynomials in $\ZZ[x_1, \ldots x_n]/(x_1\cdots x_n - 1)$. For $\SL_2$, the variable $q$ is usually preferred; $\ZZ[q,q^{-1}] \cong \ZZ[x_1,x_2]/(x_1x_2-1)$. 
A \emph{quantum integer} is defined as
\(
[n] = \frac{q^{-n}-q^{n}}{q^{-1}-q} = q^{-n+1} + q^{-n+3} + \cdots + q^{n-3} + q^{n-1},
\)
for $n\ge1$, with $[0] = 0$, and a \emph{quantum binomial} as
\[
\Qbinom{n}{k} = 
\frac{[n][n-1]\cdots[n-k+1]}{[k][k-1]\cdots[1]}
\]
for $n\ge k \ge0$. They are Laurent polynomials in $\N_0[q,q^{-1}]$. Moreover, they are centred ---invariant under $q\mapsto q^{-1}$--- and unimodal
---that is, the coefficients are non-negative, (weakly) monotone increasing up to a certain point, and from there on (weakly) monotone decreasing. They can be realised via the following specialisation of Schur polynomials:
\begin{equation}\label{eq:plethysm Q-char}
\Qbinom{n}{k} = s_{(1^k)}(q^{-n+1}, \ldots, q^{n-3}, q^{n-1}).
\end{equation}

A sequence $a_0(q), a_1(q), a_2(q), \ldots$ of (Laurent) polynomials in $\N_0[q,q^{-1}]$ is \emph{Schur log-concave} if for $n\ge1$ the difference
$a_{n}(q)^2 - a_{n+1}(q)a_{n-1}(q)$ is a unimodal centred polynomial with non-negative coefficients. It is \emph{strongly Schur log-concave} if
$a_{n+i}(q)a_{n}(q) - a_{n+i+1}(q)a_{n-1}(q)$
is a unimodal centred polynomial with non-negative coefficients for $n\ge1$ and $i\ge0$.

Our next conjecture, based on data gathered with SageMath \cite{sagemath}, concerns diagonals
in the quantum Pascal triangle (see Figure \ref{fig:quantumPascal}).
\begin{conjecture}\label{conj: diagonals}
For all choices of $n\ge k\ge 0$, $\alpha\ge-1$, and $\beta \ge 0$, the diagonal of the quantum Pascal triangle
starting at $\Qbinom{n}{k}$ and of slope $\alpha/\beta$, namely
    \[
    \Qbinom{n}{k},
    \Qbinom{n-\alpha}{k + \beta},
    \Qbinom{n-2\alpha}{k + 2\beta},
    \Qbinom{n-3\alpha}{k + 3\beta},
    \Qbinom{n-4\alpha}{k + 4\beta}, \ldots,
    \]
    is strongly Schur log-concave.
\end{conjecture}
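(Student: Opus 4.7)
My plan is to deduce Conjecture~\ref{conj: diagonals} from Main Theorem by combining the plethystic identity \eqref{eq:plethysm Q-char} with a specialization principle that converts Schur positivity into $\SL_2$-character positivity, and to handle separately the cases in which the $\SL_2$-alphabet changes.

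First I would record the following fact. If $f = \sum_\mu c_\mu s_\mu$ is a Schur-positive symmetric function, then for every $m \ge 1$ the principal $\SL_2$-evaluation $f(q^{-m+1}, q^{-m+3}, \ldots, q^{m-1})$ is the character of the $\GL_m$-module $\bigoplus_\mu V_\mu^{\oplus c_\mu}$ restricted along the principal embedding $\SL_2 \hookrightarrow \GL_m$; it therefore decomposes as a non-negative integer combination of the quantum integers $[a]$, and is hence a centered unimodal Laurent polynomial with non-negative coefficients. Applied to the \emph{vertical diagonals} $\alpha = 0$, this reduces Conjecture~\ref{conj: diagonals} to strong Schur log-concavity of the sequence $s_{(1^k)}, s_{(1^{k+\beta})}, s_{(1^{k+2\beta})}, \ldots$, which is precisely Main Theorem applied with $\lambda = (1^k)$, zero horizontal shift, and vertical shift $\beta$ (the hypothesis $j \ge \lambda'_2 = 0$ being vacuous).

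For the \emph{horizontal diagonals} $\beta = 0$, the Schur function $s_{(1^k)}$ is fixed while the alphabet $(q^{-(n-i\alpha)+1}, \ldots, q^{n-i\alpha-1})$ varies with $i$, and Main Theorem does not apply. Here I would instead invoke the strong $q$-log-concavity of $q$-binomial coefficients in $n$ from the Butler, Sagan and Krattenthaler line of work referenced in the abstract, after verifying that the centered symmetric form demanded by Conjecture~\ref{conj: diagonals} matches, up to a substitution $q \mapsto q^2$ and an overall power of $q$, the non-negativity statements for the standard $q$-binomials proved in those papers.

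The main obstacle is the genuinely two-dimensional case $\alpha \ne 0$ and $\beta \ne 0$, in which both the shape and the alphabet vary with $i$, so no single specialization principle reduces it to Main Theorem. A natural attack is representation-theoretic: for each pair $(m, i)$ one would construct an $\SL_2$-equivariant injection
\begin{multline*}
\Lambda^{k+(m-1)\beta}\bigl(V_{n-(m-1)\alpha-1}\bigr) \otimes \Lambda^{k+(m+i+1)\beta}\bigl(V_{n-(m+i+1)\alpha-1}\bigr) \\
\hookrightarrow\; \Lambda^{k+m\beta}\bigl(V_{n-m\alpha-1}\bigr) \otimes \Lambda^{k+(m+i)\beta}\bigl(V_{n-(m+i)\alpha-1}\bigr),
\end{multline*}
whose cokernel is a genuine $\SL_2$-module with character the desired difference. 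The difficulty is that when $\alpha \ne 0$ the two tensor factors on each side live on inequivalent $\SL_2$-irreducibles, so no canonical map exists between the sides and the injection must be built combinatorially --- for instance from a non-intersecting lattice path model for $\Qbinom{n-i\alpha}{k+i\beta}$ together with a sign-reversing involution on pairs in the spirit of Lindström--Gessel--Viennot. I expect this construction to carry the bulk of the work.
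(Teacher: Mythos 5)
The statement you are trying to prove is presented in the paper as a \emph{conjecture}: the authors do not claim a proof of Conjecture~\ref{conj: diagonals} in general, and they explicitly remark that it ``does however not follow from Conjecture~\ref{main conjecture}.'' What the paper does prove are the two extreme cases: the rows ($\alpha=0$, $\beta=1$) in Corollary~\ref{thm:rows} and the columns ($\alpha=-1$, $\beta=0$) in Corollary~\ref{thm:cols}, both obtained from Theorem~\ref{main:first row} via Lemma~\ref{lem:homo}. So at the outset you should flag that you are attempting something the paper does not do, and the honest conclusion of your proposal --- that the mixed case $\alpha\neq0$, $\beta\neq0$ remains unresolved --- is consistent with the paper leaving this as a conjecture.

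Your treatment of the case $\alpha=0$ is sound and coincides with the paper's method for the rows: write $\Qbinom{n}{k+i\beta}$ as $s_{(1^{k+i\beta})}$ at a fixed $\SL_2$-alphabet, apply Theorem~\ref{main:first row} with $\lambda=(1^k)$, $j=\beta$, and push the Schur positivity forward through the specialization, as in Lemma~\ref{lem:homo} and Theorem~\ref{thm:Stanley unimod}. (The paper states $\beta=1$, but, as you observe, the hypothesis $j\geq\lambda'_2=0$ is vacuous, so general $\beta$ comes for free.)

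Your treatment of the case $\beta=0$ has a genuine gap, and you also miss a much cheaper route. First the gap: you propose to import strong $q$-log-concavity from the Butler--Sagan--Krattenthaler line of work. But $q$-log-concavity asserts only that the difference lies in $\N_0[q]$; it says nothing about unimodality. The notion of Schur log-concavity in $\Lambda[q,q^{-1}]$ that the conjecture asserts requires the difference to be a non-negative combination of the quantum integers $[a]$, i.e., centered \emph{and} unimodal with non-negative coefficients (Theorem~\ref{thm:Stanley unimod}). Centeredness does come for free from the symmetry of quantum binomials and the balanced degree count, but unimodality does not follow from coefficient non-negativity, so your proposed reduction does not establish the claim. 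Second, the cheaper route: rather than evaluating $s_{(1^k)}$ on a growing alphabet, write $\Qbinom{k+\ell}{k} = h_\ell(q^{-k},\ldots,q^{k})$ as in Lemma~\ref{plethysm Q-char h}. When $\beta=0$ the alphabet is then \emph{fixed} while the shape $(\ell)$ marches along $h_0,h_1,h_2,\ldots$, and Corollary~\ref{cor:h} together with Lemma~\ref{lem:homo} (applied to $f\mapsto (f\circ s_{(k)})(q,q^{-1})$) gives the conclusion directly. This is exactly the paper's proof of Corollary~\ref{thm:cols}. For $\alpha>1$ one then passes to an arithmetic subsequence, which inherits strong Schur log-concavity by iterating the defining inequality.

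For the remaining mixed case your representation-theoretic sketch correctly identifies the obstruction (the tensor factors live on inequivalent $\SL_2$-alphabets, so no plethystic specialization applies to the whole sequence), but you do not supply the injection, and the paper does not either; this is precisely why the statement remains a conjecture.
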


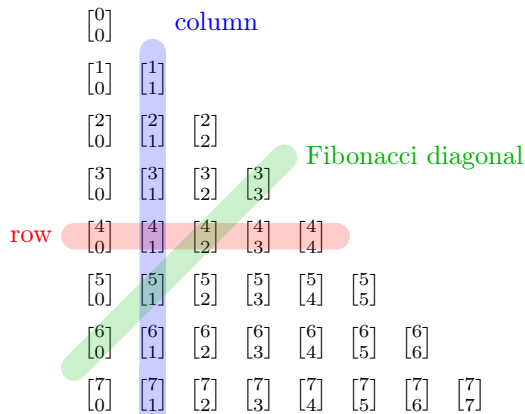
\begin{figure}
    \centering
    \begin{tikzpicture}[x=2em, y=2em]
        \foreach \i in {0,...,7}{
        \foreach \j in {0,...,\i}{
            \node (\i\j) at (\j,-\i) {$\Qbinom{\i}{\j}$};
        }}
        \begin{scope}[line cap = round, line width = 10, opacity=.2]
            \draw[red] (44.east)--(40.west) node[anchor=east, opacity=1] {row};
            \draw[blue] (61.south)--(71.south)--(11.north) node[anchor=south west, opacity=1] {column};
            \draw[green!70!black] (60.south west)--(33.north east) node[anchor=west, opacity=1] {Fibonacci diagonal};
        \end{scope}
    \end{tikzpicture}
    \caption{The first eight rows of the quantum Pascal triangle. A row, a column, and a Fibonacci diagonal are highlighted.}
    \label{fig:quantumPascal}
\end{figure}

The case $(n,k,\alpha,\beta) = (n,0,0,1)$ of the conjecture refers to the rows. 
The case $(n,k,\alpha,\beta) = (0,k,-1,0)$ of the conjecture refers to the columns. Both follow from Theorem~\ref{main:first row} 
but are of independent interest.

\begin{coro}\label{thm:rows}
    For all $n$, the $n$th row $\Qbinom{n}{0}, \Qbinom{n}{1}, \ldots, \Qbinom{n}{n}$ of the quantum Pascal triangle is strongly Schur log-concave.
\end{coro}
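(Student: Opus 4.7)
My plan is to deduce the corollary from Theorem~\ref{main:first row} by invoking the principal specialisation~\eqref{eq:plethysm Q-char}. The argument has two ingredients: (i) strong Schur log-concavity of the sequence $s_\varnothing, s_{(1)}, s_{(1^2)}, \ldots$ at the level of symmetric functions; and (ii) the fact that specialisation at $(q^{-n+1}, q^{-n+3}, \ldots, q^{n-1})$ carries Schur-positive symmetric polynomials to centred unimodal Laurent polynomials with non-negative coefficients.

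Ingredient (i) is immediate from Theorem~\ref{main:first row} applied with $\lambda = \varnothing$, $k = 0$, and $j = 1$: the conditions on the parameters are trivially satisfied since $\lambda_2 = \lambda'_2 = 0$, and the theorem then gives Schur positivity of $s_{(1^k)}\, s_{(1^{k+i})} - s_{(1^{k-1})}\, s_{(1^{k+i+1})}$ for all $k \ge 1$ and $i \ge 0$. For ingredient (ii) I would use that the evaluation point $(q^{-n+1}, q^{-n+3}, \ldots, q^{n-1})$ is the multiset of eigenvalues of $\operatorname{diag}(q, q^{-1}) \in \SL_2$ acting on the $n$-dimensional irreducible $\SL_2$-representation via the principal embedding $\SL_2 \hookrightarrow \GL_n$. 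Consequently, $s_\mu$ evaluated at this point is the character of the irreducible $\GL_n$-representation indexed by $\mu$ restricted along this embedding, and so decomposes as a non-negative integer combination of the $\SL_2$-characters $[m]$. Each $[m]$ is centred, unimodal, and has non-negative coefficients, and these three properties are preserved under non-negative integer combinations, so any Schur-positive symmetric polynomial inherits them on specialisation.

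Combining (i) and (ii), and identifying $s_{(1^k)}(q^{-n+1}, \ldots, q^{n-1})$ with $\Qbinom{n}{k}$ via~\eqref{eq:plethysm Q-char}, yields that the polynomial $\Qbinom{n}{k}\Qbinom{n}{k+i} - \Qbinom{n}{k-1}\Qbinom{n}{k+i+1}$ is centred, unimodal, and has non-negative coefficients for all $k \ge 1$ and $i \ge 0$, which is the claim. The only substantive work lies in Theorem~\ref{main:first row}; ingredient (ii) is a classical fact about the principal $\SL_2 \subset \GL_n$, which can alternatively be derived from the hook-content formula. I therefore do not anticipate any real obstacle.
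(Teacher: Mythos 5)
Your proposal is correct and takes essentially the same route as the paper: the paper deduces the corollary from the strong Schur log-concavity of $e_0, e_1, e_2, \ldots$ (its Corollary~\ref{cor:e}, obtained from Theorem~\ref{main:first row}) by pushing it through the Schur-positive homomorphism $f \mapsto (f\circ s_{(n-1)})(q,q^{-1})$ via Lemma~\ref{lem:homo}, and by Lemma~\ref{l:rule plethysm} this is exactly the principal specialisation $f \mapsto f(q^{-n+1}, q^{-n+3}, \ldots, q^{n-1})$ that you use. Your ingredient (ii) — that this specialisation sends Schur-positive elements to nonnegative sums of the $[m]$, hence to centred unimodal Laurent polynomials via Theorem~\ref{thm:Stanley unimod} — is the representation-theoretic unpacking of why that homomorphism satisfies the hypotheses of Lemma~\ref{lem:homo}, and your choice $\lambda=\varnothing$ in Theorem~\ref{main:first row} (versus the paper's $\lambda=(1)$) is an immaterial cosmetic difference.
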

\begin{coro}\label{thm:cols}
    For all $k$, the $k$th column $\Qbinom{k+0}{k}, \Qbinom{k+1}{k}, \Qbinom{k+2}{k}, \ldots$ of the quantum Pascal triangle is strongly Schur log-concave.
\end{coro}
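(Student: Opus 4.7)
The plan is to deduce the corollary from Theorem~\ref{main:first row} by a principal specialization, recognizing the entries of the $k$-th column as values of the one-row Schur functions $s_{(m)}$ evaluated at a specific $\SL_2$-character.

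First I would verify the identity
\[
\Qbinom{k+m}{k} = s_{(m)}(q^{-k}, q^{-k+2}, \ldots, q^k).
\]
This is a direct computation: expanding $[n] = q^{-n+1}(1 + q^2 + \cdots + q^{2n-2})$ in the definition of $\Qbinom{k+m}{k}$ yields $q^{-km}\binom{k+m}{k}_{q^2}$, where $\binom{k+m}{k}_{q^2}$ denotes the classical $q^2$-binomial coefficient. Rescaling the variables on the right-hand side of the identity by $q^{-k}$ produces the same factor $q^{-km}$ times the principal specialization $s_{(m)}(1, q^2, \ldots, q^{2k}) = \binom{k+m}{k}_{q^2}$.

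Second, I would note that the sequence $s_{(0)}, s_{(1)}, s_{(2)}, \ldots$ is the case $\lambda = \varnothing$, $j = 0$, $k = 1$ of Theorem~\ref{main:first row} (in the theorem's own notation, so the $k$ there is unrelated to the column index). The side conditions hold trivially, and the theorem gives Schur positivity of $s_{(n+i)}\, s_{(n)} - s_{(n+i+1)}\, s_{(n-1)}$ for every $n \ge 1$ and $i \ge 0$.

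Finally, I would specialize at the alphabet $(q^{-k}, q^{-k+2}, \ldots, q^k)$, which is the character of the $(k+1)$-dimensional irreducible $\SL_2$-module $V_k$. For any partition $\mu$, the value $s_\mu(q^{-k}, q^{-k+2}, \ldots, q^k)$ is the character of the Schur functor $\mathbb{S}^\mu(V_k)$, hence a non-negative integer combination of quantum integers. Such expressions are unimodal, centred, and non-negative, so any Schur positive symmetric function specializes at this alphabet to a Laurent polynomial with these three properties. Since evaluation is multiplicative, combining the identity of the first step with the Schur positivity of the second yields that
\[
\Qbinom{k+n+i}{k}\Qbinom{k+n}{k} - \Qbinom{k+n+i+1}{k}\Qbinom{k+n-1}{k}
\]
is unimodal, centred, and non-negative---precisely strong Schur log-concavity of the column. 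All the serious work is contained in Theorem~\ref{main:first row}, so no further obstacle arises beyond this bookkeeping.
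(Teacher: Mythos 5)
Your proposal is correct and follows essentially the same route as the paper: identify $\Qbinom{k+m}{k}$ as $h_m$ specialised at the character of the $(k+1)$-dimensional irreducible $\SL_2$-module (the paper's Lemma~\ref{plethysm Q-char h}), invoke the strong Schur log-concavity of $h_0, h_1, h_2, \ldots$ from Theorem~\ref{main:first row}, and transfer this through the multiplicative, Schur-positivity-preserving specialisation $f \mapsto (f\circ s_{(k)})(q,q^{-1})$, which is exactly the paper's application of Lemma~\ref{lem:homo}. The only minor difference is that you obtain the $h$-sequence directly as the case $\lambda=\varnothing$, $j=0$ of the theorem, whereas the paper routes it through Corollary~\ref{cor:e} and the involution $\omega$; this is immaterial.
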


Conjecture \ref{conj: diagonals} refers to many other studied sequences, such as the $n$th Fibonacci diagonal $(n,k,\alpha,\beta) = (n,0,1,1)$. See Figure \ref{fig:quantumPascal}.
The general form of Conjecture \ref{conj: diagonals} does
however not follow from Conjecture \ref{main conjecture}.
\medskip

Corollaries \ref{thm:rows} and \ref{thm:cols} are related to a series of papers by Butler \cite{Butler}, Sagan \cite{SaganInductive, SaganLogConcave}, and
the second author \cite{Krattenthaler} in the 90s. However, these
authors studied $q$-analogues (as opposed to quantum analogues). A
\emph{$q$-integer}
(without ``quantum'') is defined as \(\llbracket n\rrbracket = 1 + q + q^2  + \cdots + q^{n-1}\), and
\emph{$q$-binomial coefficients} (again without ``quantum'') are realised via the principal specialisation of Schur polynomials,
\(
q^{\binom{k}{2}} \qbinom{n}{k} = s_{(1^k)}(1, q, \ldots, q^{n-1}).
\)

A sequence $a_0(q), a_1(q), \ldots$ of $\N_0[q]$ is $q$-log-concave if $a_n^2(q) - a_{n-1}(q)a_{n+1}(q) \in \N_0[q]$, and strong $q$-log-concavity is defined in the obvious way.
Butler showed (weak) $q$-log-concavity of the  columns of the $q$-Pascal triangle and
the second author the strong version; for us it follows from the main theorem (see Note \ref{note:q-analogue}).
The strong $q$-log-concavity of rows and columns of the $q$-Pascal triangle were shown combinatorially by
Sagan~\cite{SaganLogConcave}. 
Our methods
cannot establish the $q$-log-concavity of the rows in a straightforward way.

\subsection*{Structure of the paper}
In \S\ref{sec:preliminaries} we give some background
on symmetric functions, symmetric polynomials, and representation theory of $\SL_n$. In \S\ref{sec:observations}, we define log-concavity of sequences in any algebra over an ordered ring. Lemma~\ref{lem:homo}
provides a powerful tool for translating log-concavity from one algebra to another. We conclude the section by giving (non-)examples of properties of Schur log-concavity. We
establish Theorem~\ref{main:first row} in \S\ref{sec:main}, and deduce some corollaries. In \S\ref{sec:further}, we show that Conjecture~\ref{main conjecture} satisfies necessary conditions for Schur log-concavity derived from
work of McNamara \cite{McNamara} and
of
the first author and Rosas~\cite{GutierrezRosas}. In \S\ref{sec:quantum} we show the results concerning quantum binomials.

\section{Background}\label{sec:preliminaries}
\subsection{Symmetric functions} We follow \cite{StanleyEC2}.
Let $\mathbf{x} = \{x_1, \ldots, x_n\}$ be a set of variables.
The algebra of symmetric polynomials
in $\mathbf{x}$ is the invariant space $\Lambda[\mathbf{x}] = \Q[\mathbf{x}]^{S_n}$, where the symmetric group acts by permuting variables.
The \emph{elementary} and \emph{complete homogeneous} polynomials are defined by 
\[
e_k(\mathbf{x}) = \sum_{1\le i_1 < i_2 < \cdots < i_k \le n} x_{i_1}x_{i_2}\cdots x_{i_k},
\quad
h_k(\mathbf{x}) = \sum_{1\le i_1 \le i_2 \le \cdots \le i_k \le n} x_{i_1}x_{i_2}\cdots x_{i_k}.
\]
A \emph{partition} $\lambda = (\lambda_1, \ldots, \lambda_k)$ is a weakly decreasing finite sequence of non-negative integers.
The components $\lambda_i$ are called \emph{parts} of the partition~$\lambda$.
The \emph{length} $\ell(\lambda)$ is the number of non-zero parts. We write $\mathrm{Par}_{\le n}$ for the set of partitions of length at most $n$. We usually omit $0$s and use exponents to denote repeated parts, so that e.g.~$(4,4,2,1,1,1,0)$ is
abbreviated by $(4^2,2,1^3)$.
The set of cells of $\lambda$,
also called \emph{Young diagram of~$\lambda$}, is $Y(\lambda) = \{(i,j) \ : \ 1 \le j \le \lambda_i\}$. 
We draw Young diagrams
according to the English convention; the coordinates of the cells are matrix-like. For instance,
the Young diagram of $(6^2,3,2,1)$ is
shown in Figure~\ref{fig: diagram}.

The \emph{transpose} of $\lambda$ is the partition $\lambda'$ such that $Y(\lambda') = \{(i,j) \ : \ 1 \le i \le \lambda_j\}$.
The addition of partitions is entry-wise. This generalises to addition with an integral vector: given a partition $\lambda = (\lambda_1, \ldots, \lambda_k)$ of length $\ell(\lambda) \le k$ and a vector $v\in\ZZ^k$, we let $\lambda+v = (\lambda_1+v_1, \lambda_2+v_2, \ldots, \lambda_n+v_k) \in \ZZ^k$. For a scalar $n\in\N$, we let $n\lambda = (n\lambda_1,\ldots,n\lambda_k)$. The union $\lambda\cup\mu$ of two partitions $\lambda$ and $\mu$ is the partition formed by sorting the parts of $\lambda$ and $\mu$.
We let $\lambda\cup^n\mu = \lambda\cup\mu\cup\cdots\cup\mu$ with $n$ copies of $\mu$.
\medskip

The antisymmetric polynomial
$a_\mu(\mathbf{x})$ indexed by the partition $\mu$ is\break $\det(x_j^{\mu_i})_{i,j}$. Note that letting $\rho = (n-1, \ldots, 2, 1)$
the determinant $a_\rho(\mathbf{x})$ is a Vandermonde determinant.
The \emph{Schur polynomial} $s_\lambda$ indexed by $\lambda$ is the quotient
\[
s_\lambda(\mathbf{x}) = \frac{a_{\lambda+\rho}(\mathbf{x})}{a_\rho(\mathbf{x})}.
\]
Note that \eqref{eq:plethysm Q-char} is a direct consequence of the definitions.

The Schur polynomial has a combinatorial interpretation in terms of tableaux, which is further generalised to a \emph{skew Schur polynomial} below. 
A \emph{skew partition} $\lambda/\mu$ is a pair of partitions such that $\lambda_i \ge \mu_i$ for all $i$. A partition $\lambda$ is identified with the skew partition $\lambda/\varnothing$.
A \emph{Young tableau} of shape $\lambda/\mu$ in the alphabet $[n]$ is a function $T : Y(\lambda)\setminus Y(\mu) \to [n]$. It is \emph{semistandard} if $T(i,j) \le T(i,j+1)$ and $T(i,j) < T(i+1,j)$ whenever the
expressions are defined.
The set of semistandard Young tableaux of shape $\lambda/\mu$ in the alphabet $[n]$ is denoted $\SSYT_n(\lambda/\mu)$.
The \emph{content} $\mathrm{ct}(T)$ is the multiset of its entries; the \emph{weight} of $T$ is $\mathbf{x}^T = \prod_{i \in \mathrm{ct}(T)} x_{i}$. See Figure \ref{fig: tableau} for a semistandard Young tableau of shape $(5,5,3,2,2)/(4,3,1,1)$ and weight $x_1^5x_2^2x_3$. The following result
can be found in \cite[Thm.~7.15.1]{StanleyEC2}.
\begin{figure}
\ytableausetup{smalltableaux}
    \begin{subfigure}{.4\textwidth}
    \centering
    $\ydiagram{6,6,3,2,1}$
    \caption{}
    \label{fig: diagram}
    \end{subfigure}
    \begin{subfigure}{.4\textwidth}
    \centering
    $\ytableaushort{
    {\none}{\none}{\none}{\none}1,
    {\none}{\none}{\none}12,
    {\none}11,
    {\none}2,
    13}$
    \caption{}
    \label{fig: tableau}
    \end{subfigure}
    \caption{}
\end{figure}
\begin{thm}
Let $|\mathbf{x}| = n$. Then
    $s_\lambda(\mathbf{x}) = \sum_{T\in\SSYT_n(\lambda/\varnothing)} \mathbf{x}^T$.
\end{thm}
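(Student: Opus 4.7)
The plan is to introduce the combinatorial candidate
\[
\tilde{s}_\lambda(\mathbf{x}) := \sum_{T\in\SSYT_n(\lambda/\varnothing)} \mathbf{x}^T
\]
and then show that $\tilde s_\lambda = a_{\lambda+\rho}/a_\rho$. The argument splits naturally into a \emph{symmetry} step, verifying that $\tilde s_\lambda \in \Lambda[\mathbf{x}]$, and a \emph{determinantal} step, identifying $a_\rho\,\tilde s_\lambda$ with $a_{\lambda+\rho}$.

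For the symmetry step, I would prove invariance under every simple transposition $s_i=(i,i+1)$ via the Bender--Knuth involution. For a fixed $i\in [n-1]$ and a tableau $T\in\SSYT_n(\lambda)$, one calls a cell containing $i$ or $i+1$ \emph{free} if the cell directly above it (if any) does not contain $i+1$ and the cell directly below it (if any) does not contain $i$; the free cells in each row form a subword of the form $i^a(i+1)^b$, which one replaces by $i^b(i+1)^a$. This is readily checked to be an involution on $\SSYT_n(\lambda)$ that swaps the multiplicities of $i$ and $i+1$, whence $\tilde s_\lambda$ is invariant under every $s_i$, and hence under all of $S_n$.

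For the determinantal step, I would establish $a_\rho\cdot\tilde s_\lambda = a_{\lambda+\rho}$. Writing $a_\rho = \sum_{\sigma\in S_n} \mathrm{sgn}(\sigma)\,\mathbf{x}^{\sigma(\rho)}$, the product is a signed sum over pairs $(\sigma,T)$ with weight $\mathbf{x}^{\sigma(\rho)+T}$. Reading the exponent sequence column by column as a candidate filling of shape $\lambda+\rho$, I would define a sign-reversing involution that swaps the topmost pair of adjacent rows violating the strict-decrease condition (together with a compensating rearrangement of the entries of $T$, designed to preserve semistandardness). The surviving fixed points enumerate exactly the monomials of $a_{\lambda+\rho}$, giving the desired identity; dividing by $a_\rho$ then yields $\tilde s_\lambda = s_\lambda$.

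The main obstacle is the bookkeeping for the sign-reversing involution: one has to verify that the compensating rearrangement preserves the semistandard condition and squares to the identity. A cleaner alternative would be to first deduce the Jacobi--Trudi identity $s_\lambda = \det(h_{\lambda_i-i+j})_{i,j=1}^{\ell(\lambda)}$ directly from the alternant definition of $s_\lambda$ via a Vandermonde manipulation, and then apply the Lindström--Gessel--Viennot lemma to families of lattice paths in $\mathbb{Z}^2$ whose step-sets encode the $h_k$; non-intersecting families then biject naturally with elements of $\SSYT_n(\lambda)$, row by row, because the non-intersection condition translates verbatim into the column-strict inequality.
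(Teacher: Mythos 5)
The paper does not prove this statement; it is quoted with a citation to \cite[Thm.~7.15.1]{StanleyEC2}, so there is no internal argument to compare against. Judged on its own, your second route is correct and complete: derive Jacobi--Trudi from $s_\lambda = a_{\lambda+\rho}/a_\rho$ by the standard factorisation of the alternant matrix, then interpret $\det(h_{\lambda_i-i+j})$ via Lindstr\"om--Gessel--Viennot, where the bijection between non-intersecting lattice-path families and semistandard tableaux matches paths to rows and translates non-intersection into column-strictness. In that route, however, the Bender--Knuth step is superfluous, since $a_{\lambda+\rho}/a_\rho$ is symmetric by construction.

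Your first route has a genuine gap. After expanding $a_\rho\,\tilde s_\lambda=\sum_{\sigma,T}\mathrm{sgn}(\sigma)\,\mathbf{x}^{\sigma(\rho)+\mathrm{ct}(T)}$, the exponent $\sigma(\rho)+\mathrm{ct}(T)$ is a single vector of length $n$; it has no two-dimensional structure, so ``reading it column by column as a candidate filling of shape $\lambda+\rho$'' and ``swapping the topmost pair of adjacent rows violating the strict-decrease condition'' is not a defined operation. The useful way to exploit the symmetry you established is to rewrite $a_\rho\tilde s_\lambda = \sum_T\mathcal A(\mathbf{x}^{\rho+\mathrm{ct}(T)})$ with $\mathcal A$ the antisymmetriser; since $\mathcal A$ annihilates exponent vectors with repeated entries and sends the rest to $\pm a_{\mu+\rho}$ for the appropriate partition $\mu$, what remains is to exhibit, for each $\mu$, a sign-reversing involution on the contributing tableaux whose unique fixed point, occurring only when $\mu=\lambda$, is the superstandard tableau. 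Constructing that involution is where the real work lies, and your sketch does not supply it. Given that your second route already works, you should simply use it.
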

One can more generally define $s_{\lambda/\mu}(\mathbf{x}) = \sum_{T\in\SSYT_n(\lambda/\mu)} \mathbf{x}^T$.
We direct the reader to \cite{Gessel-Viennot, FK} for combinatorial proofs of the Jacobi--Trudi identities,
given in the next two theorems.
\begin{thm}[\sc Jacobi--Trudi identity]
    \(
    s_\lambda(\mathbf{x}) = \det(h_{\lambda_i-i+j}(\mathbf{x}))_{i,j}.
    \)
\end{thm}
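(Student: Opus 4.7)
The plan is to apply the Lindström--Gessel--Viennot (LGV) lemma to a family of lattice paths that encode semistandard tableaux. First, I will interpret $h_m(\mathbf{x})$ as the generating function for lattice paths in $\ZZ^2$ from $(0,0)$ to $(m,n)$ using east and north steps, where an east step at ordinate $k$ contributes a factor $x_k$ to the weight, and north steps are unweighted. Such a path is determined by the weakly increasing sequence of the heights of its $m$ east steps, so the resulting generating function is $\sum_{1 \le i_1 \le \cdots \le i_m \le n} x_{i_1} \cdots x_{i_m} = h_m(\mathbf{x})$.

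Next, I will set $\ell = \ell(\lambda)$ and fix sources $A_i = (\ell - i, 0)$ and sinks $B_j = (\lambda_j + \ell - j, n)$ for $1 \le i, j \le \ell$, so that the weighted count of paths from $A_i$ to $B_j$ equals $h_{\lambda_j - j + i}(\mathbf{x})$. Expanding the Jacobi--Trudi determinant (invariant under transposition) as a signed sum over permutations produces a signed sum over $\ell$-tuples $(P_1, \ldots, P_\ell)$ of paths with $P_j$ going from $A_{\sigma(j)}$ to $B_j$. The LGV sign-reversing involution---swap tails at the first pair of paths that share a vertex, in a fixed total order of intersection points---cancels every intersecting tuple. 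Since both the $A_i$ and the $B_j$ are strictly decreasing in $x$-coordinate, any $\sigma \neq \mathrm{id}$ forces a crossing, so only identity-labelled non-intersecting tuples survive, each contributing a positive weight.

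Finally, I will exhibit a weight-preserving bijection between non-intersecting tuples $(P_1, \ldots, P_\ell)$ with $P_i : A_i \to B_i$ and $\SSYT_n(\lambda)$: define $T(i,k)$ to be the ordinate of the $k$-th east step of $P_i$. Weakly increasing step heights along $P_i$ give $T(i,k) \le T(i,k+1)$, while vertex-disjointness of consecutive paths $P_i$ and $P_{i+1}$ translates into the strict column inequality $T(i,k) < T(i+1,k)$; by construction $\mathbf{x}^T$ equals the path weight. Combined with the combinatorial Schur identity $s_\lambda(\mathbf{x}) = \sum_T \mathbf{x}^T$ from the preceding theorem, this yields the Jacobi--Trudi identity. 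The main delicate step will be verifying that vertex-disjointness really does impose strict inequality down columns: one must check that $P_{i+1}$, which begins and ends one unit to the left of $P_i$, is forced to stay weakly to the left of $P_i$ throughout, so that the geometric non-intersection condition and the combinatorial strict-increase condition line up exactly.
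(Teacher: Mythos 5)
The paper does not actually prove this identity; it simply cites \cite{Gessel-Viennot, FK} for a combinatorial proof, and your outline is precisely that Lindstr\"om--Gessel--Viennot argument, so the approach matches the one the paper endorses. One bookkeeping slip: with sources at height $0$ and sinks at height $n$, an east step may occur at ordinate $0$, so the generating function as you set it up is $h_m(x_0,x_1,\dots,x_n)$ rather than $h_m(x_1,\dots,x_n)$; place the sources at height $1$ (or declare that east steps at ordinate $0$ are forbidden, equivalently index heights $1,\dots,n$) so that it really equals $\sum_{1\le i_1\le\cdots\le i_m\le n} x_{i_1}\cdots x_{i_m}$. Also, $B_i$ and $B_{i+1}$ differ in $x$-coordinate by $\lambda_i-\lambda_{i+1}+1$, so ``one unit'' should read ``at least one unit''; this does not affect the argument. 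You correctly flag the genuinely delicate point of the bijection, that vertex-disjointness forces $T(i,k) < T(i+1,k)$, and a full write-up should carry out that comparison explicitly.
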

\begin{thm}[\sc Dual Jacobi--Trudi identity]
    \(
    s_{\lambda'}(\mathbf{x}) = \det(e_{\lambda_i-i+j}(\mathbf{x}))_{i,j}.
    \)
\end{thm}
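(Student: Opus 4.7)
The plan is to prove the dual Jacobi--Trudi identity by the Lindström--Gessel--Viennot (LGV) non-intersecting lattice path method, paralleling the combinatorial proofs of the ordinary Jacobi--Trudi identity cited from \cite{Gessel-Viennot, FK}.

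First, I would encode $e_k(\mathbf{x})$ as the weight generating function of lattice paths on $\ZZ^2$ consisting of east and north steps, where an east step at height $i$ carries weight $x_i$ and no two east steps of a single path may occur at the same height. This strict-height condition matches exactly the strict inequalities $i_1 < i_2 < \cdots < i_k$ in the definition of $e_k$, in contrast to the weaker condition for $h_k$ (repeats allowed) used in the proof of the ordinary Jacobi--Trudi identity.

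Next, I would place sources $A_i = (1-i, 0)$ and sinks $B_j = (\lambda_j - j + 1, n)$ for $1 \le i, j \le \ell(\lambda)$, so that the weight generating function of a single east/north path from $A_i$ to $B_j$ satisfying the strict-height condition is $e_{\lambda_j - j + i}(\mathbf{x})$. The LGV lemma then identifies $\det(e_{\lambda_i - i + j}(\mathbf{x}))$ (up to matrix transposition, which preserves the determinant) with the signed sum over $\ell(\lambda)$-tuples of such paths; the standard sign-reversing involution on intersecting tuples cancels them, and the surviving non-intersecting tuples all use the identity permutation and contribute with positive sign.

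Finally, the non-intersecting tuples biject with $\SSYT_n(\lambda')$ by reading the $j$-th path's east-step heights as the $j$-th column of a tableau: the strict-height rule within a single path gives the strict column rule, and the non-crossing rule between consecutive paths gives the weak row rule after transposition. Summing the weights then recovers $\sum_{T \in \SSYT_n(\lambda')} \mathbf{x}^T = s_{\lambda'}(\mathbf{x})$ by the combinatorial definition.

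The main obstacle is bookkeeping: the matrix entry $e_{\lambda_i - i + j}$ is indexed by the parts of $\lambda$, but the bijection has to produce tableaux of the transposed shape $\lambda'$. Setting up the sources and sinks so that the column-reading convention yields exactly the diagram $\lambda'$ (rather than some other shape) is the delicate point where the ``dual'' nature of the identity enters. A shorter but less self-contained alternative would bypass the combinatorics entirely by invoking the algebra involution $\omega\colon \Lambda \to \Lambda$ determined by $\omega(h_k) = e_k$ (which is known to satisfy $\omega(s_\lambda) = s_{\lambda'}$), and applying it term by term to the ordinary Jacobi--Trudi identity; this route, however, requires developing $\omega$, which the paper has not introduced.
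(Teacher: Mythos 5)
Your proposal is the standard Lindstr\"om--Gessel--Viennot lattice-path argument, which is exactly the combinatorial proof the paper points to by citing \cite{Gessel-Viennot, FK} (the paper itself states the theorem without giving a proof). A small cleanup you might make: instead of east steps subject to an external strict-height constraint, use diagonal north-east steps, which build the ``at most one per height'' condition into the model; with sources $A_i=(1-i,0)$, sinks $B_j=(\lambda_j-j+1,n)$, and weight $x_h$ on the diagonal step entering height $h$, the single-path generating function from $A_i$ to $B_j$ is exactly $e_{\lambda_j-j+i}(x_1,\ldots,x_n)$, and the non-intersecting families biject with $\SSYT_n(\lambda')$ by reading the $j$th path's step heights (bottom to top) as the $j$th column.
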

It follows that $s_{(k)} = h_k$ and $s_{(1^k)} = e_k$. Moreover, we obtain the following lemma by collecting the above remarks.
\begin{lem}\label{plethysm Q-char h}
We have    \(
    \Qbinom{k+\ell}{k} = h_\ell(q^{-k},\ldots,q^{k-2},q^{k}).
    \)
\end{lem}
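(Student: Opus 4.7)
The plan is direct evaluation and comparison of both sides. For the right-hand side, I unfold the combinatorial definition of $h_\ell$ as a sum over weakly increasing $\ell$-tuples in $\{0,1,\ldots,k\}$, obtaining
\[
h_\ell(q^{-k}, q^{-k+2}, \ldots, q^{k-2}, q^k) = \sum_{0 \le i_1 \le \cdots \le i_\ell \le k}\prod_{r=1}^\ell q^{2i_r - k} = q^{-k\ell} \, h_\ell(1, q^2, q^4, \ldots, q^{2k}).
\]
The last factor is the classical principal specialisation $h_\ell(1,q^2,\ldots,q^{2k}) = \qbinom{k+\ell}{k}$ with $q$ replaced by $q^2$, which can be invoked as a black box.

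For the left-hand side, I rewrite the quantum integer as $[n] = q^{-n+1}(1-q^{2n})/(1-q^2)$. Substituting this into the product formula for $\Qbinom{k+\ell}{k}$ and collecting the $q$-prefactors yields
\[
\Qbinom{k+\ell}{k} = \prod_{i=1}^k \frac{[\ell+i]}{[i]} = \prod_{i=1}^k \frac{q^{-\ell}(1-q^{2(\ell+i)})}{1-q^{2i}} = q^{-k\ell}\prod_{i=1}^k \frac{1-q^{2(\ell+i)}}{1-q^{2i}},
\]
which is exactly $q^{-k\ell}$ times the same $q$-binomial $\qbinom{k+\ell}{k}$ in base $q^2$. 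Matching this with the expression obtained for the right-hand side closes the proof.

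There is no substantive obstacle: the only care needed is keeping track of the prefactor $q^{-k\ell}$ that arises when shifting the geometric progression $q^{-k}, \ldots, q^k$ so that it starts at $1$, together with the standard principal specialisation of $h_\ell$. A more conceptual but equivalent alternative is to apply the symmetry $\Qbinom{k+\ell}{k} = \Qbinom{k+\ell}{\ell}$ to \eqref{eq:plethysm Q-char}, obtaining $\Qbinom{k+\ell}{k} = e_\ell(q^{-k-\ell+1}, q^{-k-\ell+3}, \ldots, q^{k+\ell-1})$, and then deducing the identity from the closed-form evaluations of $e_\ell$ and $h_\ell$ at geometric progressions; but the direct computation above is more self-contained.
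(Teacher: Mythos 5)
Your proof is correct. The paper offers no written proof of this lemma, stating only that it follows ``by collecting the above remarks''; the implied route is presumably the bialternant definition $s_\lambda = a_{\lambda+\rho}/a_\rho$ evaluated at the geometric progression $q^{-k},q^{-k+2},\ldots,q^k$, together with $s_{(\ell)} = h_\ell$ from the Jacobi--Trudi identity, so that $h_\ell(q^{-k},\ldots,q^k)$ is a ratio of $(k+1)\times(k+1)$ Vandermonde-type determinants simplifying to $\Qbinom{k+\ell}{\ell}=\Qbinom{k+\ell}{k}$. Your route is instead a direct match of both sides with a common normal form $q^{-k\ell}\,\qbinom{k+\ell}{k}\big|_{q\mapsto q^2}$: for the right-hand side you expand $h_\ell$ as a sum of monomials indexed by weakly increasing $\ell$-tuples, pull out the centering factor $q^{-k\ell}$, and invoke the classical principal specialisation of $h_\ell$; for the left-hand side you substitute $[n]=q^{-n+1}(1-q^{2n})/(1-q^2)$ into the product formula for $\Qbinom{k+\ell}{k}$, and the prefactors again collect to $q^{-k\ell}$ in front of the Gaussian binomial product in base $q^2$. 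All the individual steps check out, including the exponent bookkeeping. Your argument is a bit more explicit and self-contained than the determinant route, at the cost of importing the (standard, but not stated in the paper) identity $h_\ell(1,q,\ldots,q^k)=\qbinom{k+\ell}{\ell}$; the closing alternative via the symmetry $\Qbinom{k+\ell}{k}=\Qbinom{k+\ell}{\ell}$ applied to \eqref{eq:plethysm Q-char} is also sound and closer in spirit to what the paper presumably intends.
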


The \emph{algebra of symmetric functions} is the colimit $\Lambda = \lim_{n\to\infty} \Lambda[x_1, \ldots, x_n] = \Q[x_1, x_2, \ldots]^{S_\infty}$. The elementary symmetric function $e_k$, complete homogeneous symmetric function $h_k$, and skew Schur function $s_{\lambda/\mu}$ are the limits of the respective sequences of polynomials $\big(e_k(x_1,\ldots,x_n)\big)_n, \big(h_k(x_1,\ldots,x_n)\big)_n, \big(s_{\lambda/\mu}(x_1,\ldots,x_n)\big)_n$. We similarly consider $\SSYT(\lambda/\mu) = \bigcup_n \SSYT_n(\lambda/\mu)$.

The \emph{Littlewood--Richardson coefficients} $c_{\mu,\nu}^\lambda$ are the coefficients in the
expansion $s_\mu s_\nu = \sum_\lambda c_{\mu,\nu}^\lambda s_\lambda$.
The \emph{reverse reading word} $\mathrm{rrw}(T)$ of a tableau $T$ is obtained by reading the entries of $T$ 
from right to left in each row, and arranging the (ordered) rows from top to bottom. For instance, the reverse reading word of the tableau in Figure \ref{fig: tableau} is $12111231$.
A (reverse reading) word is \emph{Yamanouchi} if every prefix contains
at least as many $1$s as $2$s, at least as many $2$s as $3$s, etc.
For a proof of the next theorem, see e.g.~\cite[\S{A}1.3]{StanleyEC2}.
\begin{thm}[\sc Littlewood--Richardson rule]
    The coefficient $c_{\mu,\nu}^\lambda$ is the cardinality of the set 
    \[
    \LR_{\mu,\nu}^\lambda = \{T \in \SSYT(\lambda/\mu) \ : \ \mathrm{ct}(T) = \nu \text{~and~} \mathrm{rrw}(T) \text{~is Yamanouchi}\}.
    \]
\end{thm}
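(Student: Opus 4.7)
The plan is to use the classical framework of jeu de taquin and plactic (Knuth) equivalence. By self-adjointness of multiplication and skewing with respect to the Hall inner product on $\Lambda$ (for which the $s_\lambda$ form an orthonormal basis), one has
\[
c_{\mu,\nu}^\lambda = \langle s_\mu s_\nu, s_\lambda\rangle = \langle s_\nu, s_{\lambda/\mu}\rangle,
\]
so it suffices to prove that the coefficient of $s_\nu$ in the Schur expansion of $s_{\lambda/\mu}$ equals $|\LR_{\mu,\nu}^\lambda|$.

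For each partition $\nu$ I would distinguish the \emph{super-standard tableau} $U_\nu$ of straight shape $\nu$ whose $i$th row is filled entirely with the entry $i$. Sch\"utzenberger's jeu de taquin associates to every skew semistandard tableau $T$ a well-defined \emph{rectification} $\mathrm{rect}(T)$ of straight shape, and the slides preserve content. The \emph{fundamental theorem of rectification}, proved via confluence of slide sequences, asserts that the number of skew tableaux of shape $\lambda/\mu$ rectifying to a given straight-shape tableau $U$ depends only on the shape of $U$. Combined with the monomial expansion $s_{\lambda/\mu} = \sum_T \mathbf{x}^T$, this yields
\[
s_{\lambda/\mu} = \sum_{\nu} |\{T\in\SSYT(\lambda/\mu) : \mathrm{rect}(T)=U_\nu\}| \, s_\nu,
\]
identifying $c_{\mu,\nu}^\lambda$ with the number of skew tableaux of shape $\lambda/\mu$ that rectify to $U_\nu$.

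It remains to show that $\mathrm{rect}(T) = U_\nu$ if and only if $\mathrm{ct}(T) = \nu$ and $\mathrm{rrw}(T)$ is Yamanouchi. For this I would invoke the plactic monoid: jeu de taquin slides correspond to elementary Knuth transformations on the reverse reading word, and two straight-shape tableaux coincide iff their reading words are Knuth-equivalent. A direct induction on word length then shows that a word is Knuth-equivalent to $\mathrm{rrw}(U_\nu) = 1^{\nu_1} 2^{\nu_2}\cdots$ precisely when it has content $\nu$ and is Yamanouchi, and transporting this along $T\mapsto\mathrm{rrw}(T)$ completes the proof. The main obstacle is the foundational package of jeu de taquin results---well-definedness of $\mathrm{rect}$ and the fact that fibre sizes depend only on the target shape---which, although standard, carry most of the technical weight; the Yamanouchi characterisation of the plactic class of $U_\nu$ is comparatively elementary once this machinery is in place.
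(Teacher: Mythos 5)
The paper does not prove this statement; it is stated as classical background in \S\ref{sec:preliminaries} and the reader is referred to \cite[\S{A}1.3]{StanleyEC2} for a proof. So there is no ``paper's own proof'' to compare against. Your sketch is a correct outline of the standard jeu-de-taquin/plactic proof, which is indeed essentially the argument found in that cited appendix (and in Fulton's \emph{Young Tableaux}). The intermediate ingredients you invoke are all true and standard: self-adjointness of multiplication and skewing for the Hall inner product gives $c_{\mu,\nu}^\lambda = \langle s_\nu, s_{\lambda/\mu}\rangle$; rectification is well-defined by confluence of slides; the size of the rectification fibre over a straight-shape tableau $U$ of shape $\nu$ inside $\SSYT(\lambda/\mu)$ depends only on $\nu$ (not on $U$), giving the Schur expansion of $s_{\lambda/\mu}$; and the plactic class of the super-standard tableau $U_\nu$ consists exactly of the Yamanouchi words of content $\nu$. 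You are also right about where the technical weight sits. This is a legitimate proof route; it simply wasn't supplied by the paper, which treats the Littlewood--Richardson rule as an external black box.
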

These objects are called \emph{Littlewood--Richardson tableaux}.

\subsection{Representation theory}
We follow \cite{FH, Paget-Wildon}.
Let $\SL_2 = \SL_2(\CC)$ be the group of $2\times2$ matrices with determinant 1. A \emph{representation} of $\SL_2$ is a vector space $V$ together with a homomorphism $\rho:\SL_2 \to \GL(V)$.
The \emph{Weyl character} $\ch(V)$ of a representation $V$ is 
the unique polynomial in $\N_0[x_1, x_2]^{S_2}/
(x_1x_2-1)\cong\Lambda[q,q^{-1}]$ such that
\[
\mathrm{tr}\Big(\rho\Big(\begin{smallmatrix}
    q & 0 \\
    0 & q^{-1}
\end{smallmatrix}\Big)\Big)
= \ch(V)(q,q^{-1})
\]
for all $q$.
Weyl characters form the Grothendieck group of the category of finite dimensional representations of $\SL_2$.
\begin{ejemplo}
    Let $L(k)$ be the irreducible representation of $\SL_2$ of dimension $k$. Its Weyl character is $q^{-k+1} + q^{-k+3} + \cdots + q^{k-1}$.
\end{ejemplo}

\begin{thm}[\cite{StanleySl2}]\label{thm:Stanley unimod}
    A Laurent polynomial $a(q)$ is unimodal, centred, and has non-negative coefficients if and only if it is the Weyl character of an $\SL_2$ representation.
\end{thm}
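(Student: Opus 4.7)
The plan is to reduce the statement to a purely combinatorial decomposition of Laurent polynomials by exploiting the explicit form of the irreducible characters together with complete reducibility of finite-dimensional $\SL_2(\CC)$-representations.

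For the \emph{only if} direction, I would invoke Weyl's theorem on complete reducibility: any finite-dimensional representation $V$ of $\SL_2(\CC)$ decomposes as $V \cong \bigoplus_{k\ge 1} L(k)^{\oplus m_k}$ with $m_k\in\N_0$, hence
\[
\ch(V) \;=\; \sum_{k\ge 1} m_k \,\ch\bigl(L(k)\bigr) \;=\; \sum_{k\ge 1} m_k\bigl(q^{-k+1}+q^{-k+3}+\cdots+q^{k-1}\bigr).
\]
Each $\ch(L(k))$ is manifestly invariant under $q\mapsto q^{-1}$, has coefficients in $\{0,1\}$, and is unimodal with peak at $q^0$. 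The key observation is that a non-negative combination of centred Laurent polynomials that are all unimodal and whose peaks occur at the same position ($q^0$) is itself centred and unimodal: if $a_i\ge a_{i+1}$ and $b_i\ge b_{i+1}$ for $i\ge 0$ (and symmetrically for $i\le 0$), then the same holds for $a_i+b_i$.

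For the \emph{if} direction, I would give a greedy decomposition. Let $a(q)=\sum_i c_i q^i$ be centred, unimodal, with non-negative integer coefficients. Because the characters $\ch(L(k))$ are supported on a single parity class (exponents all $\equiv k+1 \pmod 2$), I would first split $a(q)=a_{\textup{ev}}(q)+a_{\textup{od}}(q)$ into its even- and odd-degree parts; each part is still centred and unimodal. Treating one parity class at a time, let $m$ be the largest exponent with $c_m>0$. Then
\[
a(q) \;-\; c_m\,\ch\bigl(L(m+1)\bigr)
\]
is again a Laurent polynomial supported in the same parity class, with largest exponent at most $m-2$, still centred, still with non-negative coefficients (since unimodality of $a$ gives $c_i\ge c_m$ for $|i|\le m$), and still unimodal (subtracting a constant from $c_i$ for $|i|\le m$ preserves unimodality). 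Iterating yields an expression $a(q)=\sum_{k\ge 1} m_k\,\ch(L(k))$ with $m_k\in\N_0$, which realises $a(q)$ as the character of $\bigoplus_k L(k)^{\oplus m_k}$.

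There is no serious obstacle; the only thing to be careful about is that the greedy subtraction preserves unimodality, which I would verify directly from the definition. One could alternatively argue the $(\Rightarrow)$ direction by induction on the total weight $\sum_i c_i$, peeling off a single $\ch(L(m+1))$ at each step, but peeling off the full top coefficient $c_m$ at once makes the induction terminate faster and keeps the bookkeeping minimal.
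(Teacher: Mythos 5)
The paper cites this theorem from Stanley without giving a proof, so there is no in-paper argument to compare against; what you give is, up to one subtlety, the standard argument (complete reducibility for one direction, greedy peeling of the top-weight irreducible for the other), and it is the right approach.

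The subtlety is a parity issue that you handle in the ``if'' direction but gloss over in the ``only if'' direction. The character $\ch(L(k))=q^{-k+1}+q^{-k+3}+\cdots+q^{k-1}$ is supported on exponents of parity $k-1\pmod 2$; for even $k$ the coefficient at $q^0$ is $0$, so it is \emph{not} literally the case that $\ch(L(k))$ ``is unimodal with peak at $q^0$,'' nor that the coefficient sequences satisfy $a_i\ge a_{i+1}$ for all $i\ge 0$. For instance $\ch(L(2))+\ch(L(5))$ has coefficient string $1,0,1,1,1,1,1,0,1$, which fails naive unimodality. The fix is precisely what you already do in the converse: split $\ch(V)$ into its even- and odd-exponent parts (equivalently, group the $L(k)$'s by the parity of $k$), and note each part is a non-negative combination of the all-ones characters $\ch(L(k))$ of a fixed parity; within a parity class the termwise-sum argument is then airtight. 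This also reflects an imprecision already present in the paper's own definition of unimodality (the quantum integers $[n]$ themselves are unimodal only when read within their parity class), so the theorem as stated really implicitly concerns each parity class separately. Your ``if'' direction is correct as written: unimodality of $a$ gives $c_i\ge c_m$ on the support, centredness gives $c_{-m}=c_m$, subtracting the constant $c_m$ from a unimodal centred sequence keeps it unimodal centred, and the top exponent drops by at least $2$, so the greedy recursion terminates with a non-negative integer decomposition $a=\sum_k m_k\,\ch(L(k))$.
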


The \emph{Schur functor} $S^\lambda$ is a functor from the category of $\GL_n$ representations to itself. It is uniquely characterised by the property
\[
\ch(S^\lambda V) = s_\lambda \circ \ch(V),
\]
where $\circ$ denotes \emph{plethysm}. 
For a definition of plethysm in generality, see \cite[p.~447]{StanleyEC2}. For us, it suffices to provide a rule to compute some cases of it.
\begin{lem}\label{l:rule plethysm}
    Let $g(x_1, \ldots, x_n)$ and $f(y_1, \ldots, y_m)$ be symmetric polynomials.
    
If $g(x_1, \ldots, x_n) = g_1 + g_2 + \cdots + g_m$ is a sum of $m$ monic monomials, then
$(f \circ g)(x_1, \ldots, x_n) = f(g_1, \ldots, g_m)$.
\end{lem}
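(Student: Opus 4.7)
The plan is to deduce the statement from the standard definition of plethysm, which is most conveniently phrased in terms of the power-sum basis of~$\Lambda$. Recall that plethysm is defined by the following three stipulations: (i) given any symmetric function $g$ expressed as a (possibly infinite) sum of monic monomials $g = \sum_i g_i$, one sets $p_n \circ g := \sum_i g_i^n$; (ii) for a partition $\mu = (\mu_1,\mu_2,\ldots,\mu_k)$, one sets $p_\mu \circ g := \prod_{i=1}^k (p_{\mu_i}\circ g)$; and (iii) if $f = \sum_\mu c_\mu p_\mu$ is the expansion of $f$ in the power-sum basis, then $f\circ g := \sum_\mu c_\mu (p_\mu\circ g)$. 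Since the power-sum symmetric polynomials form a basis of $\Lambda[y_1,\ldots,y_m]$, proving the identity $(f\circ g)(x_1,\ldots,x_n) = f(g_1,\ldots,g_m)$ reduces to proving it when $f$ is a power sum.

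The base case is immediate from stipulation~(i): when $f = p_n$ and $g = g_1 + \cdots + g_m$ is the given decomposition into monic monomials, one has
\[
p_n\circ g \;=\; \sum_{i=1}^m g_i^n \;=\; p_n(g_1, \ldots, g_m).
\]
The multiplicative step follows from stipulation~(ii): for $\mu = (\mu_1,\ldots,\mu_k)$,
\[
p_\mu \circ g \;=\; \prod_{i=1}^k \bigl(p_{\mu_i}\circ g\bigr) \;=\; \prod_{i=1}^k p_{\mu_i}(g_1,\ldots,g_m) \;=\; p_\mu(g_1,\ldots,g_m).
\]
Finally, writing the given symmetric polynomial $f(y_1,\ldots,y_m)$ in the power-sum basis as $f = \sum_\mu c_\mu p_\mu$ and invoking stipulation~(iii),
\[
f\circ g \;=\; \sum_\mu c_\mu (p_\mu\circ g) \;=\; \sum_\mu c_\mu p_\mu(g_1,\ldots,g_m) \;=\; f(g_1,\ldots,g_m),
\]
which is the claimed identity.

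There is no substantive obstacle: the content of the lemma is essentially that the definition of plethysm does what one would naively expect when the inner function $g$ already comes packaged as a finite sum of monic monomials. The only mild subtlety worth flagging is that $f(g_1,\ldots,g_m)$ is \emph{a priori} a polynomial in $x_1,\ldots,x_n$ whose invariance under $S_n$ needs a word of justification: since $g$ is symmetric, any permutation of $x_1,\ldots,x_n$ permutes the multiset $\{g_1,\ldots,g_m\}$, and the symmetry of $f$ in $y_1,\ldots,y_m$ then ensures that $f(g_1,\ldots,g_m)$ is fixed. With this observation, the three-step reduction above yields the lemma.
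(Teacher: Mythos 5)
The paper does not actually prove Lemma~\ref{l:rule plethysm}: it cites Stanley \cite[p.~447]{StanleyEC2} for the general definition of plethysm and then simply records this identity as a computational rule. Your proof is correct and is the standard argument: reduce to $f = p_n$ via the power-sum basis, apply the monomial definition $p_n\circ g = \sum_i g_i^n$, and extend first multiplicatively to $p_\mu$ and then $\Q$-linearly. Your closing remark on why $f(g_1,\ldots,g_m)$ is a symmetric polynomial in $x_1,\ldots,x_n$ (permutations of the $x_j$ permute the multiset $\{g_i\}$, and $f$ is symmetric in the $y_i$) is a worthwhile observation that would ordinarily be left implicit. There is no gap; you have supplied a proof that the paper elides.
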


\section{Observations on Schur log-concavity}\label{sec:observations}

The notion of Schur log-concavity makes sense in a few different contexts (symmetric polynomials, symmetric functions, characters of $\SL_n$). Hence we take a more general approach.
\begin{de}
    Let $\mathcal{A}_R$ be an algebra over an ordered ring $R$.
    Given a basis $B$ of $\mathcal{A}_R$, we say
    an element $a$ is \emph{$B$-positive} (and write $a \ge_B 0$) if the coefficients in the expansion $a = \sum_{b\in B} c_b b$ are all non-negative. The notation $a_1 \ge_B a_2$ stands for $a_1 - a_2 \ge_B 0$.
    A sequence
    \(
    (a_0, a_1, a_2, \ldots)
    \)
    of elements of $\mathcal{A}_R$
    is
\emph{$B$-log-concave} if
    \[
    a_{n}^2 \stackrel[B]{}{\ge} a_{n+1}a_{n-1},
    \]
    and
\emph{strongly $B$-log-concave} if
    \[
    a_{n+i}a_n \stackrel[B]{}{\ge} a_{n+i+1}a_{n-1},
    \]
    for all $n\ge1$ and all $i\ge0$.
\end{de}

For the present paper, we mainly study the following pairs $(\mathcal{A}_\Q, B)$:
\begin{enumerate}
    \item $\big(\Lambda,~~ \{s_\lambda \ : \ \lambda\text{ a partition}\}\big),$\smallskip
    \item $\big(\Lambda[x_1, \ldots, x_n],~~ \{s_\lambda \ : \ \ell(\lambda)\le n\}\big),$\smallskip
    \item $\big(\Lambda[q,q^{-1}],~~ \{[n] \ : \ n\ge1\}\big)$.
\end{enumerate}
In each of the cases, we refer to the basis as the \emph{Schur basis}, and write $\ge_s0$ for ``Schur positive''.
\begin{lem}\label{lem:homo}
Let $\mathcal{A}_R$ and $\mathcal{A}'_R$ be algebras over $R$, with bases $B$ and $B'$,
respectively. Suppose $b_1 b_2 \ge_B 0$ for all $b_1, b_2 \in B$, and $b'_1 b'_2 \ge_{B'} 0$ for all $b'_1, b'_2 \in B'$.
Let $\pi:\mathcal{A}_R \to \mathcal{A}'_{R}$ be an algebra homomorphism such that $\pi(b) \ge_{B'} 0$ for all $b\in B$.
Then $\pi$ sends (strongly) $B$-log-concave sequences to (strongly) $B'$-log-concave sequences.
\end{lem}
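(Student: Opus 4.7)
The proof plan is conceptually very short: simply transport the log-concavity certificate from $\mathcal{A}_R$ to $\mathcal{A}'_R$ along $\pi$, using that algebra homomorphisms preserve products and that the positive cones involved are closed under addition and multiplication by non-negative scalars.

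More concretely, suppose $(a_0, a_1, a_2, \ldots)$ is a $B$-log-concave sequence in $\mathcal{A}_R$. For any fixed $n \ge 1$, the defining condition says that $a_n^2 - a_{n+1} a_{n-1} \ge_B 0$, so I can write
\[
a_n^2 - a_{n+1} a_{n-1} = \sum_{b \in B} c_b\, b,
\]
with $c_b \in R$ and $c_b \ge 0$ for every $b$. Since $\pi$ is an algebra homomorphism, applying it yields
\[
\pi(a_n)^2 - \pi(a_{n+1}) \pi(a_{n-1}) = \sum_{b \in B} c_b\, \pi(b).
\]
By hypothesis, each $\pi(b)$ expands with non-negative coefficients in $B'$, say $\pi(b) = \sum_{b' \in B'} d_{b, b'}\, b'$ with $d_{b,b'} \ge 0$. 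Substituting and rearranging gives
\[
\pi(a_n)^2 - \pi(a_{n+1}) \pi(a_{n-1}) = \sum_{b' \in B'} \Bigl( \sum_{b \in B} c_b\, d_{b,b'} \Bigr)\, b',
\]
and each coefficient is a finite sum of products of non-negative elements of the ordered ring $R$, hence non-negative. This is exactly $B'$-log-concavity at index $n$. The strong version is proved by the identical argument applied to $a_{n+i} a_n - a_{n+i+1} a_{n-1}$ in place of $a_n^2 - a_{n+1} a_{n-1}$.

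There is no real obstacle here: the hypotheses have been arranged precisely so that the argument is a tautological unfolding of definitions. The only subtlety worth flagging is that the product-positivity hypotheses on $B$ and $B'$ (namely $b_1 b_2 \ge_B 0$ and $b'_1 b'_2 \ge_{B'} 0$) are not actually invoked in the computation above; they are part of the surrounding framework ensuring that ``$B$-log-concavity'' is a natural notion (in particular, that the positive cone is a multiplicative cone), but the lemma itself goes through using only linearity of $\pi$, multiplicativity of $\pi$, positivity of $\pi(b)$, and the ordered-ring axioms on $R$.
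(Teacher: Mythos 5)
Your proof is correct and follows essentially the same route as the paper: expand the $B$-positive certificate for $a_{n+i}a_n - a_{n+i+1}a_{n-1}$, push it through the algebra homomorphism $\pi$, and use the $B'$-positivity of each $\pi(b)$ to conclude. Your remark that the product-positivity hypotheses on $B$ and $B'$ go unused is accurate; the paper's own proof likewise never invokes them, so they serve only to frame log-concavity as a sensible notion in the given algebras.
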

\begin{proof}
     Consider a $B$-positive expansion $a_{n+i}a_n - a_{n+i+1}a_{n-1} = \sum_{b\in B} c_b b$. Taking the image under $\pi$, we get
    $\pi(a_{n+i})\pi(a_n) - \pi(a_{n+i+1})\pi(a_{n-1}) = \sum_{b\in B} c_b \pi(b)$. Since each $\pi(b)$ is $B'$-positive, so is the right-hand side.
\end{proof}

Examples of such homomorphisms include
\begin{enumerate}
    \item $\omega : \Lambda \to \Lambda, \quad s_\lambda \mapsto s_{\lambda'}$,
    \item $\Lambda \to \Lambda[\mathbf{x}], \quad s_\lambda \mapsto s_\lambda(\mathbf{x})$,
    \item $\Lambda \to \Lambda, \quad s_\lambda \mapsto s_\lambda \circ g$ for any fixed Schur positive $g\in\Lambda$,
    \item $\Lambda[x_1,x_2] \to \Lambda[q,q^{-1}], \quad s_\lambda(x_1,x_2) \mapsto s_\lambda(q,q^{-1})$.
\end{enumerate}
as well as their compositions. See \cite[Thm.~2.5.1]{BergeronNotes} for more examples.

\subsection{Properties and non-properties of Schur log-concavity}
A finite\break sequence $f_0, f_1, f_2, \ldots, f_n$ is said to be $B$-unimodal if 
\[
f_0 \le_B f_1 \le_B \cdots \le_B f_k \ge_B f_{k+1} \ge_B \cdots \ge_B f_n
\]
for some $k$.
Note that, unlike in the case of sequences of natural numbers, Schur log-concavity does not imply Schur unimodality.
\begin{ejemplo}[\sc Log-concave but not unimodal]
    The sequence $1, s_1, s_2$ is Schur log-concave. But $1 - s_1$ is not Schur positive nor Schur negative.

    Another example, in which all elements are of the same degree, is $s_{4}, s_{3,1}, s_{2,2}$.
\end{ejemplo}

For sequences of integers, the notion of ``strong log-concavity'' is equivalent to that of ``log-concavity''.
We now show by example that for our three algebras $\Lambda, \Lambda[\mathbf{x}]$ and $\Lambda[q,q^{-1}]$, the notion of strong Schur log-concavity is indeed stronger than the notion of Schur log-concavity. 
\begin{ejemplo}[\sc Weak but not strong log-concavity in $\Lambda$]
    Consider the sequence
    \[
    3s_{2,1}, ~~~
    2(s_{3}+s_{2,1}+s_{1,1,1}), ~~~
    2(s_{3}+s_{2,1}+s_{1,1,1}), ~~~
    3s_{2,1}.
    \]
Computer algebra reveals that it is
Schur log-concave, but not strongly so.
To give an example: $s_{3,3}$ appears with multiplicity $-1$ in $4(s_{3}+s_{2,1}+s_{1,1,1})^2 - 9s_{2,1}^2$.
\end{ejemplo}
\begin{ejemplo}[\sc Weak but not strong log-concavity in {$\Lambda[\mathbf{x}]$}]
\label{ej:weak and strong in polys}
    Let $\mathbf{x} = (x_1,x_2)$ and consider the sequence
    \[
    s_{3,3}(\mathbf{x}),~~~
    s_{4,2}(\mathbf{x}),~~~
    s_{5,1}(\mathbf{x}),~~~
    s_{3,3}(\mathbf{x}).
    \]
    It is Schur log-concave:
    \begin{align*}
        s_{4,2}(\mathbf{x})^2 - s_{3,3}(\mathbf{x})s_{5,1}(\mathbf{x}) &=
        s_{7,5}(\mathbf{x}) + s_{6,6}(\mathbf{x}) & \stackrel[s]{}{\ge}0,\\
        s_{5,1}(\mathbf{x})^2 - s_{4,2}(\mathbf{x})s_{3,3}(\mathbf{x}) &=
        s_{10,2}(\mathbf{x}) + s_{9,3}(\mathbf{x}) + s_{8,4}(\mathbf{x}) + s_{6,6}(\mathbf{x}) & \stackrel[s]{}{\ge}0.
    \end{align*}
    However, it is not strongly Schur log-concave:
    \begin{align*}
        s_{5,1}(\mathbf{x}) ~ s_{4,2}(\mathbf{x})
        =~& s_{9,3}(\mathbf{x}) + s_{8,4}(\mathbf{x}) + s_{7,5}(\mathbf{x}) 
        \not\stackrel[s]{}{\ge} 
        s_{6,6}(\mathbf{x}) 
        = s_{3,3}(\mathbf{x})^2.
    \end{align*}
\end{ejemplo}
\begin{ejemplo}[\sc Weak but not strong log-concavity in {$\Lambda[q,q^{-1}]$}]\label{ej:strong}
    We take the sequence from Example \ref{ej:weak and strong in polys} and specialise $\mathbf{x} \mapsto (q,q^{-1})$ to obtain the sequence
    $[1], [3], [5], [1]$. By the
Clebsch--Gordan rule,
    \begin{align*}
    [3]^2 - [5][1] &= ([5]+[3]+[1])-[5]
    \quad\text{and}\\
    [5]^2 - [3][1] &= ([9]+[7]+[5]+[3]+[1]) - [3]
    \end{align*}
    are Schur positive, and therefore the sequence is Schur log-concave. But
    \[
    [3][5] - [1]^2 = ([7]+[5]+[3]) - [1]
    \]
    which is not Schur positive: the sequence is not strongly Schur log-concave.
\end{ejemplo}

\section{Proofs of Theorem \ref{main:first row} and its corollaries}\label{sec:main}
\setcounter{mainthm}{0}
\setcounter{coro}{0}
\begin{mainthm}
The following sequence is strongly Schur log-concave:
\[s_{\lambda},~~
    s_{\lambda\cup(1^j)+(k)},~~
    s_{\lambda\cup(1^{2j})+(2k)},~~
    s_{\lambda\cup(1^{3j})+(3k)},~~ \ldots\] 
for any $k = 0$ or $k \ge \lambda_2$, and any $j = 0$ or $j\ge  \lambda'_2$.
\end{mainthm}
\begin{proof}
    Fix $k \ge \lambda_2$, $j \ge \lambda'_2$, $n\ge0$ and $i\ge0$. The cases in which $k=0$ or $j=0$ will follow immediately from the proof of the more general case.
    We aim to show
    \[
    s_{\begin{tikzpicture}[x=.6em, y=-.8em, baseline=-1.6em, fill=white] \tiny
        \filldraw (2,-1) rectangle++(6,1);
        \filldraw (0,1) rectangle++(1.2,4);
        \filldraw[x=.3em] (0,-1) -- (6,-1) -- (6,0) -- (5,0) -- (5,.5) -- (3.33,.5) -- (3.33,1) -- (3.33,1) -- (0,1) -- (0,-1);
        \node (*) at (5,-.5) {$(n+i)k$};
        \node[rotate=90] (*) at (.6,3) {$(n+i)j$};
        \node (*) at (.7,0) {$\lambda$};
    \end{tikzpicture}}
    s_{\begin{tikzpicture}[x=.6em, y=-.8em, baseline=-1.6em, fill=white] \tiny
        \filldraw (2,-1) rectangle++(5,1);
        \filldraw (0,1) rectangle++(1.2,3.5);
        \filldraw[x=.3em] (0,-1) -- (6,-1) -- (6,0) -- (5,0) -- (5,.5) -- (3.33,.5) -- (3.33,1) -- (3.33,1) -- (0,1) -- (0,-1);
        \node (*) at (4.5,-.5) {$nk$};
        \node[rotate=90] (*) at (.6,2.5) {$nj$};
        \node (*) at (.7,0) {$\lambda$};
    \end{tikzpicture}}
    \stackrel[s]{}{\ge}
    s_{\begin{tikzpicture}[x=.6em, y=-.8em, baseline=-1.6em, fill=white] \tiny
        \filldraw (2,-1) rectangle++(7,1);
        \filldraw (0,1) rectangle++(1.2,5);
        \filldraw[x=.3em] (0,-1) -- (6,-1) -- (6,0) -- (5,0) -- (5,.5) -- (3.33,.5) -- (3.33,1) -- (3.33,1) -- (0,1) -- (0,-1);
        \node (*) at (5.5,-.5) {$(n+i+1)k$};
        \node[rotate=90] (*) at (.6,3.5) {$(n+i+1)j$};
        \node (*) at (.7,0) {$\lambda$};
    \end{tikzpicture}}
    s_{\begin{tikzpicture}[x=.6em, y=-.8em, baseline=-1.6em, fill=white] \tiny
        \filldraw (2,-1) rectangle++(4,1);
        \filldraw (0,1) rectangle++(1.2,3);
        \filldraw[x=.3em] (0,-1) -- (6,-1) -- (6,0) -- (5,0) -- (5,.5) -- (3.33,.5) -- (3.33,1) -- (3.33,1) -- (0,1) -- (0,-1);
        \node (*) at (4,-.5) {$(n\!\!-\!\!1)k$};
        \node[rotate=90] (*) at (.6,2.5) {$(n\!\!-\!\!1)j$};
        \node (*) at (.7,0) {$\lambda$};
    \end{tikzpicture}}\,.
    \]
Let $\mu, \nu, \rho, \delta$ denote the four partitions in the above equation, in order. An equivalent restatement is that for all $\theta$, we have 
to show the inequality $c^\theta_{\mu\nu} \ge c^\theta_{\rho\delta}$.
of Littlewood--Richardson coefficients.
    It suffices to see that there is an injective map of Littlewood--Richardson tableaux
    \[
    \LR^\theta_{\mu\nu}\longleftarrow\LR^\theta_{\rho\delta}.
    \]
    The map is defined as follows:
    \[    
    \begin{tikzpicture}[x=.8em, y=-.8em, baseline=-1.6em] \scriptsize
        \filldraw[fill=lightgray] (0,0) -- (13,0) -- (13,1) -- (8,1) -- (8,2) -- (7,2) -- (7,3) -- (5,3) -- (4,3) -- (4,4) -- (1,4) -- (1,11) -- (0,11) -- (0,0);
        \filldraw[fill=white] (0,0) rectangle++(9,1);
        \filldraw[fill=white] (0,0) -- (4,0) -- (4,1) -- (3,1) -- (3,2) -- (2,2) -- (2,3) -- (1,3) -- (1,5) -- (0,5) -- (0,0);
        \node (*) at (6,0.5) {$(n\!+\!i)k$};
        \filldraw[fill=white] (0,3) rectangle++(1,4);
        \node[rotate=90] (*) at (.5,5) {$(n\!+\!i)j$};
        \node (*) at (1,1) {$\lambda$};
        
        \filldraw[fill=yellow] (8,0) rectangle++(1,1);
        \node (*) at (8+.5,.5) {$1$};
        \filldraw[fill=yellow] (9,0) rectangle++(1,1);
        \node (*) at (9+.5,.5) {...};
        \filldraw[fill=yellow] (10,0) rectangle++(1,1);
        \node (*) at (10+.5,.5) {$1$};
        
        \filldraw[fill=yellow] (0,8) rectangle++(1,1);
        \node (*) at (.5,8+.5) {$*$};
        \filldraw[fill=yellow] (0,9) rectangle++(1,1);
        \node[rotate=90] (*) at (.5,9+.5) {...};
        \filldraw[fill=yellow] (0,10) rectangle++(1,1);
        \node (*) at (.5,10+.5) {$*$};
    \end{tikzpicture}
    \longmapsfrom
    \begin{tikzpicture}[x=.8em, y=-.8em, baseline=-1.6em] \scriptsize
        \filldraw[fill=lightgray] (0,0) -- (13,0) -- (13,1) -- (8,1) -- (8,2) -- (7,2) -- (7,3) -- (5,3) -- (4,3) -- (4,4) -- (1,4) -- (1,11) -- (0,11) -- (0,0);
        \filldraw[fill=white] (0,0) rectangle++(11,1);
        \filldraw[fill=white] (0,0) -- (4,0) -- (4,1) -- (3,1) -- (3,2) -- (2,2) -- (2,3) -- (1,3) -- (1,5) -- (0,5) -- (0,0);
        \filldraw[fill=white] (0,3) rectangle++(1,7);
        \node[rotate=90] (*) at (.5,6.5) {$(n\!+\!i\!+\!1)j$};
        \node (*) at (7,0.5) {$(n\!+\!i\!+\!1)k$};
        \node (*) at (1,1) {$\lambda$};
    \end{tikzpicture}
    \]
    \ytableausetup{smalltableaux,centertableaux}%
    Let $T$ be a Littlewood--Richardson tableau of shape $\theta / (\lambda\cup(1^{(n+i+1)j})+((n+i+1)k))$ and content $\lambda\cup(1^{(n-1)j})+((n-1)k)$. The map has two steps: (i) insert $k$ copies of $1$ in the first row of the tableau, and (ii) insert
    \[
    \lambda'_1 + (n-1)j + 1, ~~
    \lambda'_1 + (n-1)j + 2, ~~
    \ldots, ~~
    \lambda'_1 + nj
    \]
    in the first column. (In the diagram above, the relevant cells are filled with stars.)

    The map is clearly injective; we conclude by showing that the map is well-defined. Note that if the image of the map is semistandard then it is of the correct shape and content, and that the reverse reading word is trivially Yamanouchi. So it suffices to check that it is semistandard. 

    The image of $T$ under (i) is semistandard unless $T$ has a $\ytableaushort{1}$ in row~$2$ and column $\lambda_1+ (n+i)k+1$.
    The greatest column in which a $\ytableaushort{1}$ can exist in row $2$ is
    \[
    \lambda_2+\underbrace{\lambda_1+(n-1)k}_{\#\ytableaushort{1}\text{s}}.
    \]
    But if
    \(
    \lambda_2+(\lambda_1+(n-1)k) \ge \lambda_1+(n+i)k+1
    \)
    then $\lambda_2 \ge (i+1)k + 1 > k$. This contradicts the hypothesis of the theorem. Hence the tableau $P$ obtained by applying (i) to $T$ is semistandard.

    The image of $P$ under (ii) is clearly semistandard if $T$ has no entries in column~$2$ and row $\lambda'_1+ (n+i)j+1$.
    The total number of entries in column $2$ is at most
    \[
    \lambda'_2 + \underbrace{\lambda'_1 + (n-1)j}_{\#\text{~of distinct entries}}.
    \]
    We reach a contradiction $\lambda'_2 > j$ as before.
\end{proof}
\begin{cor}\label{cor:e}
    The sequence $e_0, e_1, e_2, \ldots$ is strongly Schur log-concave.
\end{cor}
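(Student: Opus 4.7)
The plan is to derive Corollary \ref{cor:e} as an immediate specialisation of Theorem \ref{main:first row}. Since $e_k = s_{(1^k)}$ (the dual Jacobi--Trudi formula, or simply the transpose of $h_k = s_{(k)}$), we choose parameters $\lambda = \varnothing$, $k = 0$, $j = 1$ in the theorem. Then $\lambda_2 = 0$ and $\lambda'_2 = 0$, so the hypotheses ``$k=0$ or $k\ge\lambda_2$'' and ``$j=0$ or $j\ge\lambda'_2$'' are trivially satisfied. The sequence produced by the theorem becomes
\[
s_\varnothing,~ s_{\varnothing\cup(1^{1})+(0)},~ s_{\varnothing\cup(1^{2})+(0)},~ s_{\varnothing\cup(1^{3})+(0)},~ \ldots \;=\; s_\varnothing,~ s_{(1)},~ s_{(1,1)},~ s_{(1^3)},~ \ldots,
\]
which is exactly $e_0, e_1, e_2, \ldots$. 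Strong Schur log-concavity is then inherited from the theorem.

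As a cross-check (and an alternative proof), one can invoke Corollary \ref{cor:h}, which asserts the strong Schur log-concavity of $h_0, h_1, h_2, \ldots$, and then transport it via the algebra involution $\omega : \Lambda \to \Lambda$ that sends $s_\lambda$ to $s_{\lambda'}$. Since $\omega$ permutes the Schur basis (in particular $\omega(s_\lambda) \ge_s 0$ for every partition $\lambda$), Lemma \ref{lem:homo} applies and carries strong Schur log-concavity of $(h_k)_{k\ge 0}$ to that of $(\omega(h_k))_{k\ge 0} = (e_k)_{k\ge 0}$. There is no real obstacle here: the corollary is a direct degenerate case of the main theorem, so the only thing to verify is that the boundary values $k=0$ and $\lambda = \varnothing$ are admitted by the hypotheses, which they are.
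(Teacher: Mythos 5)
Your proof is correct and follows the same approach as the paper: invoke Theorem~\ref{main:first row} directly. The paper's own proof instantiates it with $\lambda=(1)$, $k=0$, $j=1$, which yields the sequence $s_{(1)}, s_{(1,1)}, s_{(1,1,1)},\ldots = e_1, e_2, e_3,\ldots$; your choice $\lambda=\varnothing$, $k=0$, $j=1$ produces $s_\varnothing, s_{(1)}, s_{(1,1)},\ldots = e_0, e_1, e_2,\ldots$ on the nose, which is slightly cleaner because it literally gives the stated sequence rather than a tail of it (the difference being the trivial condition $e_1 e_{1+i} - e_0 e_{2+i} = s_{(2,1^i)} \ge_s 0$). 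One caveat about your ``cross-check'': as the paper is structured, Corollary~\ref{cor:h} is itself \emph{deduced} from Corollary~\ref{cor:e} via $\omega$, so using $\omega$ to transfer Corollary~\ref{cor:h} back to Corollary~\ref{cor:e} would be circular unless you first establish Corollary~\ref{cor:h} independently, say via the Jacobi--Trudi determinant $h_{n+i}h_n - h_{n+i+1}h_{n-1} = s_{(n+i,n)}$; the paper records an analogous identity $e_{n+i}e_n - e_{n+i+1}e_{n-1} = s_{(2^n,1^i)}$ as a second, determinant-based proof of Corollary~\ref{cor:e} itself.
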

\begin{proof}
    This is the case $\lambda = (1)$, $k=0$, $j=1$ of the theorem.
\end{proof}
Another proof of the corollary is by observing that
\[
e_{n+i}e_n - e_{n+i+1}e_{n-1} = \det\left(\begin{smallmatrix}
    e_{n+i} & e_{n+i+1}\\ e_{n-1} & e_n
\end{smallmatrix}\right) =
s_{(2^n,1^i)},
\]
by the dual Jacobi--Trudi identity.
Note that this gives a second combinatorial proof of the corollary by path enumeration arguments
(cf.\ \cite{Gessel-Viennot, FK}).
The next corollary can be shown similarly using the usual Jacobi--Trudi identity.

\begin{cor}\label{cor:h}
    The sequence $h_0, h_1, h_2, \ldots$ is strongly Schur log-concave.
\end{cor}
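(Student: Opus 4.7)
The plan is to mirror exactly the argument sketched after Corollary~\ref{cor:e}, but using the (usual) Jacobi--Trudi identity rather than its dual. The key observation is that the expression $h_{n+i}h_n - h_{n+i+1}h_{n-1}$ is already a $2\times 2$ determinant, so one only needs to recognise it as a Jacobi--Trudi determinant for a suitable partition.

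Concretely, I would first write
\[
h_{n+i}h_n - h_{n+i+1}h_{n-1} = \det\left(\begin{smallmatrix} h_{n+i} & h_{n+i+1}\\ h_{n-1} & h_n\end{smallmatrix}\right).
\]
Next, I would recall the Jacobi--Trudi identity $s_\lambda = \det(h_{\lambda_p - p + q})_{p,q}$ and match entries: for $\lambda = (n+i,\,n)$, which is a genuine partition provided $n\ge 1$ and $i\ge 0$, the $(p,q)$ entries read $h_{n+i}, h_{n+i+1}, h_{n-1}, h_n$ respectively. Hence the determinant above equals $s_{(n+i,n)}$, which is manifestly Schur positive.

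This handles $n\ge 1$ and $i\ge 0$ in one stroke, giving strong Schur log-concavity of $(h_0, h_1, h_2, \ldots)$. There is essentially no obstacle: the only thing to verify is that the indexing partition $(n+i,n)$ has weakly decreasing parts, which is automatic. As with Corollary~\ref{cor:e}, one could alternatively give a bijective proof via the Lindström--Gessel--Viennot lemma on non-intersecting lattice paths (cf.\ \cite{Gessel-Viennot, FK}), but the one-line determinantal argument is sufficient.
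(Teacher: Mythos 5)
Your proof is correct; the computation matches the Jacobi--Trudi identity exactly, and $(n+i,n)$ is indeed a partition for $n\ge1$, $i\ge0$. This is, however, not the route the paper formally takes: the paper deduces Corollary~\ref{cor:h} from Corollary~\ref{cor:e} by applying the involution $\omega$ (which swaps $e_k\leftrightarrow h_k$ and $s_\lambda\leftrightarrow s_{\lambda'}$) together with Lemma~\ref{lem:homo}, which guarantees that Schur-positivity-preserving algebra homomorphisms transport (strong) Schur log-concavity. Your determinantal argument is exactly the one the paper alludes to in the sentence ``The next corollary can be shown similarly using the usual Jacobi--Trudi identity'' immediately after the alternative proof of Corollary~\ref{cor:e}, but does not carry out. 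Each approach has a small advantage: yours is self-contained and produces the explicit identity $h_{n+i}h_n - h_{n+i+1}h_{n-1} = s_{(n+i,n)}$ (and, as you note, yields a Lindstr\"om--Gessel--Viennot combinatorial proof for free), whereas the paper's $\omega$-argument is a one-line reduction illustrating the general transport principle of Lemma~\ref{lem:homo}, which is then reused throughout \S\ref{sec:quantum}.
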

\begin{proof}
This follows from Lemma \ref{lem:homo} after applying $\omega$ to the previous corollary.
\end{proof}

An interesting family of corollaries of Theorem \ref{main:first row} concerns the case in which $\lambda$ is a hook. For instance:
\begin{cor}\label{double-hooks}
    For all $n\ge0$ and $m\ge0$, the sequence $s_{(n,1^m)}, s_{(n+1,1^{m+1})},\break s_{(n+2,1^{m+2})}, \ldots$ is strongly Schur log-concave.
\end{cor}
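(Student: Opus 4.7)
The plan is to derive this corollary as an immediate specialisation of Theorem \ref{main:first row}. I would set $\lambda = (n,1^m)$ together with the parameters $k = 1$ and $j = 1$. With these choices, the $t$-th term of the theorem's sequence becomes
\[
s_{\lambda \cup (1^t) + (t)} = s_{(n,1^{m+t}) + (t)} = s_{(n+t,\, 1^{m+t})},
\]
which matches the $t$-th term of the sequence stated in the corollary. In particular, the initial term $s_\lambda = s_{(n,1^m)}$ agrees.

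To justify the application, I would verify the hypotheses on $k$ and $j$. For $\lambda = (n,1^m)$, one has $\lambda_2 \in \{0,1\}$ (with $\lambda_2 = 1$ iff $m \geq 1$) and, dually, $\lambda'_2 \in \{0,1\}$ (with $\lambda'_2 = 1$ iff $n \geq 2$). Hence the choices $k = 1$ and $j = 1$ automatically fulfil $k \geq \lambda_2$ and $j \geq \lambda'_2$, and the clauses ``$k = 0$'' or ``$j = 0$'' cover the degenerate subcases. The hypotheses of Theorem \ref{main:first row} are therefore met, and its conclusion is precisely the corollary.

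The only mild subtlety is the boundary case $n = 0$ with $m \geq 1$, where $(0, 1^m)$ fails to be a partition and so $s_{(0,1^m)} = 0$ by the convention adopted in the paper. In this situation the strong log-concavity inequalities involving the vanishing initial term hold trivially, while strong log-concavity of the sub-sequence from index $t = 1$ onwards follows from Theorem \ref{main:first row} applied with $\lambda = (1, 1^{m+1})$ and $k = j = 1$ (after a shift of index). I do not anticipate any real obstacle: the whole argument is essentially the recognition of the correct specialisation, and the hook shape $(n+t, 1^{m+t})$ is manifestly of the form demanded by the theorem's sequence when $j = k = 1$.
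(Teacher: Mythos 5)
Your proof is correct and follows exactly the same route as the paper: Corollary~\ref{double-hooks} is the specialisation of Theorem~\ref{main:first row} with $\lambda = (n,1^m)$, $k=1$, $j=1$. The paper's proof is a one-liner; your explicit checks that $k\ge\lambda_2$ and $j\ge\lambda'_2$ hold for hooks, and your treatment of the degenerate boundary case $n=0$, are careful but do not introduce any new ideas beyond the paper's argument.
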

\begin{proof}
    This is the case $\lambda = (n,1^m)$, $k=1$, $j=1$ of the theorem.
\end{proof}

\begin{cor}[\cite{BergeronBiagioliRosas, Lam}]
    For all $n\ge0$, the sequence $s_{(n,1^0)}, s_{(n,1^1)},\break s_{(n,1^2)}, \ldots$ is strongly Schur log-concave.
\end{cor}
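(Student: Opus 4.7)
The plan is to deduce this corollary as an immediate specialisation of Theorem \ref{main:first row}, in exactly the same spirit as Corollaries \ref{cor:e}, \ref{cor:h}, and \ref{double-hooks}. I would set $\lambda = (n)$, $k = 0$, and $j = 1$ in the statement of the theorem.

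First I would verify the hypotheses. Since $\lambda = (n)$ is a single-row partition, one has $\lambda_2 = 0$ and $\lambda' = (1^n)$, so $\lambda'_2 \le 1$. The condition ``$k = 0$ or $k \ge \lambda_2$'' is met trivially by $k = 0$, while the condition ``$j = 0$ or $j \ge \lambda'_2$'' is met by $j = 1$. Next I would check that the sequence produced by the theorem matches the one in the corollary: with these parameters, the $m$th term is
\[
s_{\lambda \cup (1^{mj}) + (mk)} = s_{(n) \cup (1^m)} = s_{(n, 1^m)},
\]
which is exactly the $m$th term of the corollary, and the $m=0$ case reduces to $s_{(n)} = s_{(n,1^0)}$.

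I do not expect any substantive obstacle here: the entire argument is a one-step instantiation of Theorem \ref{main:first row}. (As indicated in the statement, an independent route would be to invoke \cite{BergeronBiagioliRosas, Lam} directly, but this is unnecessary once the main theorem is in hand.)
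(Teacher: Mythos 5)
Your specialisation $\lambda = (n)$, $k = 0$, $j = 1$ is correct and is exactly the intended one-line deduction from Theorem~\ref{main:first row}, in the same spirit as the surrounding corollaries. Be aware that the paper's printed proof states the case $k=1$, $j=0$, which would instead yield the sequence $s_{(n)}, s_{(n+1)}, s_{(n+2)}, \ldots$ and not the hook sequence of the statement; the roles of $j$ and $k$ appear to have been inadvertently swapped there, and your choice of parameters is the right one.
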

\begin{proof}
    This is the case $\lambda = (n)$, $k=1$, $j=0$ of the theorem.
\end{proof}

\section{Further evidence for Conjecture \ref{main conjecture}}\label{sec:further}
In \cite{McNamara}, McNamara gives necessary conditions for partitions $\mu, \nu, \rho, \delta$ to satisfy $s_\mu s_\nu - s_\rho s_\delta \ge_s 0$. 
For fixed $\lambda, \beta$, and $\alpha$, let
\[
s_{\lambda^{(0)}}, ~~ 
s_{\lambda^{(1)}}, ~~ 
s_{\lambda^{(2)}}, ~~ 
s_{\lambda^{(3)}}, ~~ \ldots
\]
be a shorthand for the sequence of Conjecture~\ref{main conjecture}. 
As further supporting evidence for the conjecture, we show in Proposition~\ref{p:McNamara's} that the expressions 
\begin{equation}
\label{eq:expression from conjecture}
s_{\lambda^{(n)}} s_{\lambda^{(n+i)}} - s_{\lambda^{(n-1)}} s_{\lambda^{(n+i+1)}}
\end{equation}
satisfy one of these necessary conditions.
In Proposition~\ref{p:Gut-Rosas' condition}, we derive another necessary condition for Schur positivity of $s_\mu s_\nu - s_\rho s_\delta \ge_s 0$. We show in
Proposition~\ref{p:Gut-Rosas'} that it also holds for the expression \eqref{eq:expression from conjecture}
    in the case $\beta=\varnothing$.

\subsection{The condition of McNamara}
\begin{de}[\sc Dominance order]
    Given partitions $\mu$ and $\nu$, we write $\mu \preceq \nu$ and say $\nu$ \emph{dominates} $\mu$ if $\mu_1+\cdots+\mu_j \le \nu_1+\cdots+\nu_j$ for all $j$.
\end{de}
\begin{p}[{\cite[Prop.~3.1]{McNamara}}]
    If $s_\mu s_\nu \ge_s s_\rho s_\delta$ then $\mu \cup \nu \preceq \rho \cup \delta$.
\end{p}
We require a combinatorial description of the dominance order. An \emph{inner corner} of $\lambda$ is a cell of $Y(\lambda)$ such that $Y(\lambda)\setminus\{c\}$ is the set of cells of a partition. An \emph{outer corner} of $\lambda$ is a tuple $c \in \N^2$ such that $Y(\lambda)\cup\{c\}$ is the set of cells of a partition.
\begin{p}[{\cite[Prop.~2.3]{Brylawski}}]
\label{lem:dominance}
Given partitions $\mu$ and $\nu$ of the same size, $\mu \preceq \nu$ if and only if one can reach
$\mu$ by starting with $\nu$ and successively moving an inner corner to the next outer corner immediately below.
\end{p}
For instance, $(4,2,1^3)\prec(4,3,2)$ since we have the following sequence of moves:
\[
\ytableaushort{{}{}{}{},{}{}{\times},{}{\circ},\none,\none}
\leadsto
\ytableaushort{{}{}{}{},{}{}{\times},{},{\circ},\none}
\leadsto
\ytableaushort{{}{}{}{},{}{},{}{\times},{\circ},\none}
\leadsto
\ytableaushort{{}{}{}{},{}{},{},{\circ},{\times}}
\]
A \emph{composition} is a tuple of non-negative integers. Given compositions $\rho$ and $ \delta$, we let $\Sort(\rho, \beta)$ be the partition obtained by sorting the parts of $\rho$ and $\delta$ in weakly decreasing order. Note that, if $\alpha$ and $\delta$ are partitions, then $\Sort(\rho,\beta) = \rho\cup\delta$.
\begin{lem}\label{lem:aux necessary}
    Let $\mu, \nu, \rho, \delta$ be compositions of length $n$ satisfying
    \[
    \mu_i+\nu_i = \rho_i+\delta_i
    \quad \text{and}\quad
    \rho_i \le \mu_i \le \nu_i \le \delta_i \quad \text{for all }i.
    \]
    Then $\Sort(\mu, \nu) \preceq \Sort(\rho, \delta)$.
\end{lem}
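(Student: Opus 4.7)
The plan is to reduce the dominance inequality to a comparison of partial sums. Since $\mu_i + \nu_i = \rho_i + \delta_i$ for all $i$ ensures $|\mu \cup \nu| = |\rho \cup \delta|$, and dominance between partitions of the same size is equivalent to the inequality of top-$k$ partial sums for every $k$, it suffices to prove
\[
S_k(\mu \cup \nu) \;\le\; S_k(\rho \cup \delta) \quad\text{for every } k \ge 0,
\]
where $S_k(Z)$ denotes the sum of the $k$ largest elements of a multiset $Z$.

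I would interpolate between $(\mu, \nu)$ and $(\rho, \delta)$ via a sequence of elementary ``spreading'' moves: pick an index $i$ with $\mu_i > \rho_i$ (equivalently $\nu_i < \delta_i$) and replace $(\mu_i, \nu_i)$ by $(\mu_i - 1,\, \nu_i + 1)$. Performing $t_i := \mu_i - \rho_i$ such moves at each index $i$ (in any order) takes $(\mu, \nu)$ to $(\rho, \delta)$ in $\sum_i t_i$ steps; throughout, entries stay non-negative (since $\mu_i$ never drops below $\rho_i \ge 0$) and the invariant $\mu_j \le \nu_j$ is preserved coordinatewise. It therefore suffices to show that each elementary move weakly increases $S_k$ for every $k$.

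This single-step inequality is the heart of the argument, and it is short. Setting $M = \mu_i$ and $N = \nu_i$ (so $M \le N$), write the full multiset as $X \sqcup \{M, N\}$, where $X$ collects the other $2n - 2$ parts; after the move the multiset becomes $X \sqcup \{M-1, N+1\}$. Expressing $S_k$ as the maximum over $j \in \{0, 1, 2\}$ of $S_{k-j}(X)$ plus the top-$j$ sum of the distinguished pair, the comparison reduces to checking three cases for that top-$j$ sum: $0 = 0$ when $j=0$; $N \le N+1$ when $j=1$ (using $M \le N$); and $M + N = (M-1) + (N+1)$ when $j=2$. Each is a weak inequality in the correct direction, so $S_k$ does not decrease after the move.

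Iterating over the $\sum_i t_i$ elementary moves then yields $S_k(\mu \cup \nu) \le S_k(\rho \cup \delta)$ for every $k$, completing the proof. The only mildly delicate point in the plan is the bookkeeping for the iterative argument---verifying that each step stays within valid compositions while preserving $\mu_j \le \nu_j$ coordinatewise---but this follows directly from the construction; the per-step comparison itself is essentially one line.
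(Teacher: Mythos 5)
Your proof is correct, and it is a close relative of the paper's argument but verified by a more elementary mechanism. Both proofs interpolate from $(\rho,\delta)$ to $(\mu,\nu)$ through a chain of intermediate tuples and invoke transitivity of dominance. The paper replaces one whole coordinate pair $(\rho_i,\delta_i)\mapsto(\mu_i,\nu_i)$ per step and certifies each step by citing Brylawski's characterization of dominance via inner-corner moves (Lemma~\ref{lem:dominance}); you instead decompose each such replacement further into $t_i=\mu_i-\rho_i$ unit moves $(M,N)\mapsto(M-1,N+1)$ and verify dominance at the unit-move level by a direct computation with top-$k$ partial sums, using the decomposition $S_k(X\sqcup\{a,b\})=\max_{j\in\{0,1,2\}}\bigl(S_{k-j}(X)+\text{top-}j(\{a,b\})\bigr)$. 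What your route buys is self-containedness: no external combinatorial lemma is needed, and the per-step check is a one-line case analysis. What the paper's route buys is brevity, by outsourcing the per-step verification to a known result. Your bookkeeping is sound: the invariant $M\le N$ is preserved (since $M-1\le N<N+1$) and is exactly what is needed for the $j=1$ case of your comparison to go through, and all intermediate tuples remain non-negative and within the prescribed bounds $\rho_i\le\cdot\le\cdot\le\delta_i$.
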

\begin{proof}
For $1 \le i \le n$, we have
\begin{multline*}    
\Sort\!\Big(
(\mu_1, \ldots, \mu_{i-1}, \mu_i, \rho_{i+1}, \ldots, \rho_{n}),
(\nu_1, \ldots, \nu_{i-1}, \nu_i, \delta_{i+1}, \ldots, \delta_{n})
\Big)\\
\preceq
\Sort\!\Big(
(\mu_1, \ldots, \mu_{i-1}, \rho_i, \rho_{i+1}, \ldots, \rho_{n}),
(\nu_1, \ldots, \nu_{i-1}, \delta_i, \delta_{i+1}, \ldots, \delta_{n})
\Big)
\end{multline*}
by Lemma \ref{lem:dominance}, since one can reach the former by moving
$\delta_i-\nu_i$ $(=\mu_i - \rho_i)$ corners of the latter down.
By transitivity of the dominance order, we obtain the claim.
\end{proof}
The next proposition shows that the differences of products of Schur functions involved in Conjecture~\ref{main conjecture} satisfy the necessary condition of McNamara.
Recall that $\lambda^{(n)}$ is shorthand for $\lambda \cup^n \beta + n\alpha$ when $\alpha$ and $\beta$ are clear from context.
\begin{p}\label{p:McNamara's}
Fix partitions $\lambda, \beta$ and an integral vector $\alpha$ such that $\beta_1\le\lambda_{\ell(\lambda)}$ and $\ell(\alpha)<\ell(\lambda)$.
Then
\[\lambda^{(n)} \cup \lambda^{(n+i)} \preceq \lambda^{(n-1)} \cup \lambda^{(n+i+1)}\]
for all $n$ and $i$ such that the $\lambda^{(*)}$ are partitions.
\end{p}
\begin{proof}
Consider the four compositions $\mu, \nu, \rho, \delta$ of 
length $\ell(\lambda)$ with parts
\begin{align*}
    \mu_j &= \min\!\Big\{\lambda^{(n)}_j, ~ \lambda^{(n+i)}_j\Big\}, &
    \rho_j &= \min\!\Big\{\lambda^{(n-1)}_j, ~ \lambda^{(n+i+1)}_j\Big\},
    \\
    \nu_j &= \max\!\Big\{\lambda^{(n)}_j, ~ \lambda^{(n+i)}_j\Big\}, & 
    \delta_j &= \max\!\Big\{\lambda^{(n-1)}_j, ~ \lambda^{(n+i+1)}_j\Big\}.
\end{align*}   
It is routine to check that they
satisfy the hypotheses of Lemma~\ref{lem:aux necessary}. Hence $\Sort(\mu,\nu)\preceq\Sort(\rho,\delta)$.
The claim is then that $\Sort(\mu,\nu)\cup^{2n+i}\beta\preceq\Sort(\rho,\delta)\cup^{2n+i}\beta$, which is clear since $\beta_1 \le \Sort(\mu,\nu)_{2\ell(\lambda)}$ and $\beta_1 \le \Sort(\rho,\delta)_{2\ell(\lambda)}$ by hypothesis.
\end{proof}

\subsection{The condition of Rosas and the first author}
Let $\Out(\mu)$ be the set of outer corners of $\mu$. 
Consider the sum in $\N^2$ defined by
\[
(i,j) + (\ell,k) := (i+\ell-1,j+k-1).
\]
Given a partition $\lambda$, its complement $Y(\lambda)^c$ is an ideal of $\N^2$ with respect to this sum. Given two sets $A$ and $B \subseteq \N^2$, let $A + B = \{a+b \ : \ a \in A, ~ b\in B\}$.
\begin{thm}[{\cite[Thm.~3.5]{GutierrezRosas}}]
    If $c_{\mu\nu}^\theta>0$ then $Y(\theta)^c \supseteq\Out(\mu)+\Out(\nu)+\N^2$.
\end{thm}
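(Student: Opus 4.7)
The plan is first to reduce the theorem to a Weyl-type eigenvalue inequality and then to prove that inequality. Begin by observing that $Y(\theta)^c$ is an up-set of $\N^2$ with respect to the shifted sum: if $(r,c)\in Y(\theta)^c$ and $(a,b)\in\N^2$, then $(r+a-1,c+b-1)$ has row $\ge r$ and column $\ge c$, so it lies outside $Y(\theta)$ as well. Consequently it suffices to verify that every pairwise sum $(i,\mu_i+1)+(\ell,\nu_\ell+1)$ of outer corners of $\mu$ and $\nu$ lies in $Y(\theta)^c$, that is,
\[
\theta_{i+\ell-1}\le\mu_i+\nu_\ell
\]
whenever $\mu_{i-1}>\mu_i$ and $\nu_{\ell-1}>\nu_\ell$.

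The natural combinatorial attack is via the Littlewood--Richardson rule. I would fix a $T\in\LR_{\mu,\nu}^\theta$ and let $\mu^{(a)}$ denote the partition whose diagram equals $Y(\mu)$ together with the cells of $T$ of value at most $a$. One has a chain $\mu=\mu^{(0)}\subseteq\cdots\subseteq\mu^{(n)}=\theta$, each step adds a horizontal strip of size $\nu_a$, and the Yamanouchi condition on the reverse reading word translates to $\mu^{(a)}_{r+1}\le\mu^{(a-1)}_r$ for all $a,r\ge 1$. Iterating this inequality $\ell-1$ times starting from $(a,r)=(n,i+\ell-1)$ yields
\[
\theta_{i+\ell-1}\le\mu^{(n-\ell+1)}_i.
\]
It then remains to squeeze $\mu^{(n-\ell+1)}_i-\mu_i\le\nu_\ell$ by combining the horizontal-strip structure of each $\mu^{(a)}/\mu^{(a-1)}$ with the two outer-corner hypotheses; concretely, by examining the $\ell\times(\nu_\ell+1)$ rectangle of cells in rows $[i,i+\ell-1]$ and columns $[\mu_i+1,\mu_i+\nu_\ell+1]$, which under the contradiction hypothesis lies entirely in the skew shape, and tracking where the values $1,\ldots,\ell$ can appear given the at-most-one-per-column constraint imposed by each horizontal strip.

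The main obstacle is precisely this last step: the pure Yamanouchi chain is too loose---one can exhibit small examples where $\mu^{(n-\ell+1)}_i>\mu_i+\nu_\ell$---so the outer-corner hypotheses and the horizontal-strip structure must be exploited together in a genuinely new way. As a cleaner but less elementary alternative, the desired bound is exactly Weyl's inequality for sums of Hermitian matrices: by Klyachko's theorem, positivity of $c_{\mu\nu}^\theta$ produces Hermitian matrices with spectra $\mu$ and $\nu$ whose sum has spectrum $\theta$, and then Weyl's inequality gives $\theta_{i+\ell-1}\le\mu_i+\nu_\ell$ for \emph{all} $i,\ell\ge 1$; the outer-corner hypotheses are then only used to parameterise the generators of the ideal $\Out(\mu)+\Out(\nu)+\N^2$.
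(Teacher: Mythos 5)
The theorem is cited from \cite{GutierrezRosas} and is not reproved in the present paper, so there is no in-paper argument to compare against; what follows is an assessment of the proposal on its own terms. Your reduction of the inclusion $Y(\theta)^c\supseteq\Out(\mu)+\Out(\nu)+\N^2$ to the inequalities $\theta_{i+\ell-1}\le\mu_i+\nu_\ell$ is correct, and since $\Out(\mu)+\N^2=Y(\mu)^c$ the outer-corner restriction on $(i,\ell)$ is immaterial: the target is exactly Weyl's inequality for all $i,\ell$. Your combinatorial attempt is, as you acknowledge, incomplete, and the obstruction you name is genuine. For instance with $\mu=(1)$, $\nu=(2,1)$, $\theta=(3,1)$, the unique Littlewood--Richardson filling has intermediate chain $\mu^{(0)}=(1)$, $\mu^{(1)}=(3)$, $\mu^{(2)}=(3,1)$, so that $\mu^{(n-\ell+1)}_i=\mu^{(1)}_1=3>2=\mu_1+\nu_2$ even though $\theta_{i+\ell-1}=\theta_2=1\le 2$; factoring the argument through $\mu^{(n-\ell+1)}_i$ alone loses too much, and the lattice condition has to be used in tandem with (not after) the horizontal-strip bound. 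Your second route is logically valid: $c_{\mu\nu}^\theta>0$ implies, by Klyachko's theorem with $N=1$ (Knutson--Tao saturation is not needed for this implication), the existence of Hermitian matrices $A,B$ with spectra $\mu,\nu$ and $\mathrm{spec}(A+B)=\theta$, and Weyl's eigenvalue inequality then gives $\theta_{i+\ell-1}\le\mu_i+\nu_\ell$. This does prove the statement, though it is a decidedly heavy hammer compared with the direct tableau-theoretic argument given in \cite{GutierrezRosas}.
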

\begin{p}\label{p:Gut-Rosas' condition}
    Suppose $s_\mu s_\nu \ge_s s_\rho s_\delta$. Then, for each $\theta$ such that $c^\theta_{\rho,\delta} > 0$, we have $Y(\theta)^c \supseteq \Out(\mu) + \Out(\nu) + \N^2$.
\end{p}
\begin{proof}
    Since $s_\mu s_\nu \ge_s s_\rho s_\delta$ we deduce $c_{\mu\nu}^\theta \ge c_{\rho\delta}^\theta$ for all $\theta$. In particular, if $c_{\rho\delta}^\theta>0$ then $c_{\mu\nu}^\theta>0$, which by Theorem~\ref{p:Gut-Rosas' condition} implies the claim.
\end{proof}
The following result supports Conjecture~\ref{main conjecture} 
in the case $\beta = \varnothing$. After applying Lemma \ref{lem:homo}, it also applies to the case $\alpha=\varnothing$. 
\begin{p}\label{p:Gut-Rosas'}
    Fix a partition $\lambda$ and an integral vector $\alpha$.
    For each $\theta$,
    \[
    \text{if}\quad c_{\lambda+(n-1)\alpha,~\lambda+(n+i+1)\alpha}^\theta > 0
    \quad\text{then}\quad
    Y(\theta)^c \supseteq \Out(\lambda + n\alpha) + \Out(\lambda+(n+i)\alpha) + \N^2
    \]
    for all $n$ and $i$ such that all of the relevant sums are partitions.
\end{p}
\begin{proof}
Note that for any partition $\mu$ we have $\Out(\mu) + \N^2 = \{(j,\mu_j+1)\}_{j \le \ell(\mu)} + \N^2$. We define four partitions
\[
\rho = \lambda+(n-1)\alpha, ~~
\mu = \lambda+n\alpha, ~~
\nu = \lambda+(n+i)\alpha, ~~
\delta = \lambda+(n+i+1)\alpha.
\]
Assume $c^\theta_{\rho\delta}>0$. Then,
\begin{align*}
Y(\theta)^c
 &\supseteq
\{(j,\rho_j+1)\}_{j} + \{(\ell,\delta_\ell+1)\}_{\ell \le n} + \N^2\\
&= 
\{(j+\ell-1,\rho_j+\delta_\ell+1)\}_{j, \ell} + \N^2.
\end{align*}
We aim to show
\[
\{(j+\ell-1,\rho_j+\delta_\ell+1)\}_{j, \ell} + \N^2
\supseteq
\{(j+\ell-1,\mu_j+\nu_\ell+1)\}_{j, \ell}\,,
\]
which implies the claim.

The containment follows from the following observation: for all $j$ and $\ell$,
\begin{enumerate}
    \item if $\alpha_j \ge \alpha_\ell$ then
    \(
    \mu_j+\nu_\ell \ge \rho_j+\delta_\ell,
    \)
    \item if $\alpha_j < \alpha_\ell$ then
    \(
    \mu_j+\nu_\ell \ge \rho_\ell+\delta_j.
    \) \qedhere
\end{enumerate}
\end{proof}

\section{The quantum Pascal triangle}\label{sec:quantum}

In this section we show the results concerning quantum binomials.

\setcounter{mainthm}{0}
\begin{coro}
    For all $n$, the $n$th row $\Qbinom{n}{0}, \Qbinom{n}{1}, \ldots, \Qbinom{n}{n}$ of the quantum Pascal triangle is strongly Schur log-concave.
\end{coro}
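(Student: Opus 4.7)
The plan is to deduce this corollary from Corollary~\ref{cor:e} (strong Schur log-concavity of $(e_k)_{k\ge 0}$ in $\Lambda$) by transporting along a suitable Schur-positive algebra homomorphism, via Lemma~\ref{lem:homo}. The target homomorphism is the $n$-variable specialization
\[
\pi:\Lambda\longrightarrow\Lambda[q,q^{-1}],\qquad f\longmapsto f(q^{-n+1},q^{-n+3},\ldots,q^{n-1}).
\]
Since $e_k=s_{(1^k)}$, equation~\eqref{eq:plethysm Q-char} gives $\pi(e_k)=\Qbinom{n}{k}$ for $0\le k\le n$ and $\pi(e_k)=0$ for $k>n$, so the image of the sequence $(e_0,e_1,e_2,\ldots)$ under $\pi$ is precisely the $n$th row of the quantum Pascal triangle (followed by zeros).

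To apply Lemma~\ref{lem:homo}, I must verify three positivity hypotheses. First, products of Schur functions expand non-negatively in the Schur basis of $\Lambda$ (Littlewood--Richardson rule). Second, products of quantum integers expand non-negatively in the basis $\{[m]\}_{m\ge 1}$ of $\Lambda[q,q^{-1}]$; this is the Clebsch--Gordan formula for tensor products of irreducible $\SL_2$-representations, a special case of which appeared in Example~\ref{ej:strong}. Third, and this is the only nontrivial step, I must check that $\pi(s_\lambda)$ is centred, unimodal, with non-negative coefficients for every partition $\lambda$.

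The key observation for this third point is that $\pi(s_\lambda)=s_\lambda\circ[n]=\ch(S^\lambda L(n))$: the plethystic rule of Lemma~\ref{l:rule plethysm} applied to the character $[n]=q^{-n+1}+q^{-n+3}+\cdots+q^{n-1}$ of the standard $n$-dimensional irreducible $\SL_2$-representation $L(n)$ shows that the specialization coincides with the Weyl character of $S^\lambda L(n)$. Theorem~\ref{thm:Stanley unimod} then guarantees that this Weyl character is a non-negative integer combination of the quantum integers $[m]$, i.e.\ Schur-positive in $\Lambda[q,q^{-1}]$. I expect this step to be the main conceptual point of the argument; the rest is bookkeeping.

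With all hypotheses of Lemma~\ref{lem:homo} verified, the strong Schur log-concavity of $(e_k)_{k\ge 0}$ in $\Lambda$ (Corollary~\ref{cor:e}) transfers under $\pi$ to strong Schur log-concavity of $\Qbinom{n}{0},\Qbinom{n}{1},\ldots,\Qbinom{n}{n},0,0,\ldots$ in $\Lambda[q,q^{-1}]$. Truncating the trailing zeros (which give only trivial inequalities of the form $0\ge -a_{n+i+1}\cdot 0$) yields the desired statement for the finite row.
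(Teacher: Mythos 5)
Your proof is correct and follows essentially the same route as the paper: apply Lemma~\ref{lem:homo} to the sequence $(e_k)_{k\ge 0}$ from Corollary~\ref{cor:e} under the homomorphism $f\mapsto (f\circ s_{(n-1)})(q,q^{-1})$, which is exactly your specialization $\pi$ since $s_{(n-1)}(q,q^{-1})=[n]=q^{-n+1}+\cdots+q^{n-1}$. You merely spell out the positivity hypotheses of Lemma~\ref{lem:homo} more explicitly than the paper does; note that for the third hypothesis, Schur positivity of $\pi(s_\lambda)$ in $\Lambda[q,q^{-1}]$ follows directly from $\pi(s_\lambda)$ being a Weyl character (complete reducibility of $\SL_2$-representations), so the detour through Theorem~\ref{thm:Stanley unimod} is not actually needed.
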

\begin{proof}
  Since $\Qbinom{n}{k}$ is the image of $e_k\circ s_{(n-1)}$ in the Grothendieck group of the category of $\SL_2$ representations,
the lemma follows from Corollary~\ref{cor:e} and Lemma~\ref{lem:homo}
by the composition $f \mapsto (f\circ s_{(n-1)})(q,q^{-1})$. 
\end{proof}
Note that a direct proof using the dual Jacobi--Trudi identity is also possible, and gives
\begin{equation}\label{eq:Qbin-e}
   \left( s_{(\lambda_1,\lambda_2)'}\circ s_{(n-1)}\right) (q, q^{-1})
    = \Qbinom{n}{\lambda_1}\Qbinom{n}{\lambda_2} - \Qbinom{n}{\lambda_1+1}\Qbinom{n}{\lambda_2-1}.
\end{equation}
\begin{coro}
    For all $k$, the $k$th column $\Qbinom{k+0}{k}, \Qbinom{k+1}{k}, \Qbinom{k+2}{k}, \ldots$ of the quantum Pascal triangle is strongly Schur log-concave.
\end{coro}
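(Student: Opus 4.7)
The plan is to mirror the preceding corollary's proof, replacing $e_k \circ s_{(n-1)}$ by $h_\ell \circ s_{(k)}$.

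First I would express $\Qbinom{k+\ell}{k}$ as the image of $h_\ell$ under a suitable homomorphism. Since $s_{(k)} = h_k$, the specialisation
\[
s_{(k)}(q, q^{-1}) = q^{-k} + q^{-k+2} + \cdots + q^{k-2} + q^k
\]
is a sum of $k+1$ distinct monic monomials in $q$. Lemma~\ref{l:rule plethysm} then gives
\[
(h_\ell \circ s_{(k)})(q, q^{-1}) = h_\ell(q^{-k}, q^{-k+2}, \ldots, q^{k-2}, q^k),
\]
and the right-hand side equals $\Qbinom{k+\ell}{k}$ by Lemma~\ref{plethysm Q-char h}. Thus the $k$th column of the quantum Pascal triangle is the image of the sequence $h_0, h_1, h_2, \ldots$ under the composite ring homomorphism $\Lambda \to \Lambda[q, q^{-1}]$ given by $f \mapsto (f \circ s_{(k)})(q, q^{-1})$.

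Next I would invoke Corollary~\ref{cor:h}, which asserts that $h_0, h_1, h_2, \ldots$ is strongly Schur log-concave in $\Lambda$. The composite above is the concatenation of two homomorphisms that appear in the list of examples following Lemma~\ref{lem:homo}: plethysm by the Schur-positive element $s_{(k)}$ preserves Schur positivity, and evaluation at $(q, q^{-1})$ sends $s_\lambda$ to the Weyl character of an $\SL_2$ representation, hence by Theorem~\ref{thm:Stanley unimod} to a non-negative integer combination of quantum integers $[n]$. A single application of Lemma~\ref{lem:homo} along the composite then delivers strong Schur log-concavity of the $k$th column.

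I anticipate no significant obstacle: this is precisely the ``column'' counterpart of the proof of Corollary~\ref{thm:rows}, with Corollary~\ref{cor:h} taking the role of Corollary~\ref{cor:e} (equivalently, the Jacobi--Trudi identity replacing its dual). The only point worth double-checking is that $s_{(k)}(q, q^{-1})$ really is a sum of \emph{distinct} monic monomials, so that Lemma~\ref{l:rule plethysm} is applicable; this is immediate from the display above.
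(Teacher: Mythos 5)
Your argument is correct and is essentially the paper's proof: express $\Qbinom{k+\ell}{k}$ as $(h_\ell\circ s_{(k)})(q,q^{-1})$, invoke Corollary~\ref{cor:h} for the strong Schur log-concavity of $h_0,h_1,\ldots$, and push the sequence through Lemma~\ref{lem:homo} along the composite $f\mapsto (f\circ s_{(k)})(q,q^{-1})$. Two small remarks: Lemma~\ref{l:rule plethysm} does not actually require the monic monomials of $g$ to be distinct (only that there be $m$ of them), so the distinctness check is unnecessary; and the fact that $\pi(s_\lambda)$ is a non-negative combination of $[n]$'s is most directly the statement that Weyl characters lie in the Grothendieck group spanned by the $[n]$, rather than an application of Theorem~\ref{thm:Stanley unimod}.
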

\begin{proof}
Since $\Qbinom{k+\ell}{k}$ is the image of $h_\ell\circ s_{(k)}$ in the Grothendieck group of the category of $\SL_2$ representations,
the lemma follows from Corollary~\ref{cor:h} and Lemma~\ref{lem:homo} by the
composition $f \mapsto (f\circ s_{(k)})(q,q^{-1})$.
\end{proof}
A direct proof using the Jacobi--Trudi identity gives
\begin{equation}\label{eq:Qbin-h}
   \left( s_{(\lambda_1,\lambda_2)} \circ s_{(k)}\right) (q,q^{-1}) = \Qbinom{k+\lambda_1}{k}\Qbinom{k+\lambda_2}{k} - \Qbinom{k+\lambda_1+1}{k}\Qbinom{k+\lambda_2-1}{k}.
\end{equation}
\begin{note}[\sc $q$-analogues]
    \label{note:q-analogue}
    The $q$-analogue of Corollary \ref{thm:cols} follows similarly:
    \begin{quote}
    \itshape
    For all $k$, the $k$th column
    $\qbinom{k+0}{k}, \qbinom{k+1}{k}, \qbinom{k+2}{k}, \ldots$
    of the $q$-Pascal triangle is strongly $q$-log-concave. 
    \end{quote}
    Indeed, the $\ell$th term is $(h_\ell\circ s_{(k)})(1,q)$, and $f\mapsto f(1,q)$ sends Schur functions to polynomials with positive coefficients. Hence
the claim follows from the Schur log-concavity of $h_0, h_1, h_2, \ldots$ after applying $f \mapsto (f\circ s_{(k)})(1,q)$.

    However, note that the $q$-analogue of Corollary~\ref{thm:rows}
does not follow in the same manner:
application of $f \mapsto (f\circ s_{(n-1)})(1,q)$ to the sequence
    $e_0, e_1, e_2, \ldots$ gives 
    \[
    q^{\binom{0}{2}}\qbinom{n}{0}, ~~~
    q^{\binom{1}{2}}\qbinom{n}{1}, ~~~
    q^{\binom{2}{2}}\qbinom{n}{2}, ~~~
    q^{\binom{3}{2}}\qbinom{n}{3}, ~~~\ldots
    \]
    which is not the $n$th row of the $q$-Pascal triangle. A combinatorial proof with Jacobi--Trudi determinants is however possible; see \cite{SaganLogConcave} for details.
\end{note}
\medskip

Although we have mainly been interested in sequences of irreducible characters, there is no reason to not explore other sequences.
\begin{cor}
    For any $n, m, d \ge 0$, the sequence of $\ZZ[q, q^{-1}]$ whose $k$th term is $\Qbinom{n+m+2k-1}{m+k}\Qbinom{n+d+k}{n+m+2k}$
    is strongly Schur log-concave.
\end{cor}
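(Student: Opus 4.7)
The plan is to realise each term $a_k$ of the sequence as a specialisation of a single hook Schur polynomial, and then transport strong Schur log-concavity through Lemma~\ref{lem:homo}.

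For the identification, apply the quantum hook-content formula
\[
s_\mu(q^{-K}, q^{-K+2}, \ldots, q^K) = \prod_{c \in \mu} \frac{[K+1+\mathrm{ct}(c)]}{[h(c)]}
\]
to the hook $\mu = (a, 1^b)$. Reading off the contents $0, 1, \ldots, a-1, -1, -2, \ldots, -b$ and the hook lengths $a+b, a-1, \ldots, 1, b, b-1, \ldots, 1$, a short calculation yields
\[
s_{(a, 1^b)}(q^{-K}, q^{-K+2}, \ldots, q^K) = \Qbinom{K+a}{a+b}\Qbinom{a+b-1}{b}.
\]
Setting $a = n+k$, $b = m+k$ and $K = d$, the right-hand side becomes $\Qbinom{n+d+k}{n+m+2k}\Qbinom{n+m+2k-1}{m+k} = a_k$. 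Since $s_{(d)}(q, q^{-1}) = q^{-d} + q^{-d+2} + \cdots + q^d$ is a sum of $d+1$ monic monomials, Lemma~\ref{l:rule plethysm} gives
\[
a_k = \bigl(s_{(n+k, 1^{m+k})} \circ s_{(d)}\bigr)(q, q^{-1}).
\]

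By Corollary~\ref{double-hooks}, the sequence $\bigl(s_{(n+k, 1^{m+k})}\bigr)_{k \ge 0}$ is strongly Schur log-concave in $\Lambda$. The map
\[
\pi \colon \Lambda \longrightarrow \ZZ[q, q^{-1}], \qquad f \longmapsto \bigl(f \circ s_{(d)}\bigr)(q, q^{-1})
\]
is an algebra homomorphism, and for every partition $\lambda$ its image $\pi(s_\lambda)$ is an $\SL_2$ character, hence a non-negative integer combination of the $[n]$'s by Theorem~\ref{thm:Stanley unimod}. Lemma~\ref{lem:homo} therefore transfers strong Schur log-concavity from the hook sequence to $(a_k)_{k \ge 0}$, as desired.

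The main obstacle is the identification in the first step: guessing that the correct shape is the hook $(n+k, 1^{m+k})$ and that the correct specialisation is at the $d+1$ variables $q^{-d}, q^{-d+2}, \ldots, q^d$. Once this is recognised, the verification reduces to a brief hook-content calculation, and the remaining steps are formal applications of the machinery already developed earlier in the paper.
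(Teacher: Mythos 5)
Your proof is correct and follows essentially the same route as the paper: identify the $k$th term as $\bigl(s_{(n+k,1^{m+k})}\circ s_{(d)}\bigr)(q,q^{-1})$, then transfer strong Schur log-concavity from the hook sequence of Corollary~\ref{double-hooks} via Lemma~\ref{lem:homo}. The only difference is that you derive the quantum hook-content evaluation for hooks explicitly, whereas the paper simply cites it from the literature.
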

\begin{proof}
    A quantum version of Stanley's hook-content formula \cite[Cor.~7.21.4]{StanleyEC2} for hook shapes says
    \[
    \big(s_{(n,1^m)}\circ s_{(d)}\big)(q,q^{-1}) = \Qbinom{n+m-1}{m} \Qbinom{n+d}{n+m}.
    \]
    See for instance \cite{GMSW-hooks}. The claim then follows from Corollary \ref{double-hooks} and Lemma \ref{lem:homo} after applying $f \mapsto (f\circ s_{(d)})(q,q^{-1})$.
\end{proof}

\section{Concluding remarks}
An unpublished conjecture of Lam, Postnikov and Pylyavskyy is the following. We point to \cite{DobrovolskaPylyavskyy} for the definition of an alcoved polytope. Denote by $\chi_\lambda$ the $\SL_n$ character indexed by $\lambda$, which is the image of $s_\lambda$ in $\ZZ[x_1, \ldots, x_n]/(x_1\cdots x_n-1)$.
\begin{conjecture}
    Let $\rho, \mu, \nu, \rho$ be partitions. Suppose $\mu+\nu = \rho+\delta$, and that $\mu$ and $\nu$ belong to the minimal alcoved polytope containing $\rho$ and $\delta$. Then $\chi_{\mu}\chi_{\nu} \ge_s \chi_{\rho}\chi_{\delta}$.
\end{conjecture}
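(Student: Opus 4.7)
My plan is to reduce the conjecture to an elementary exchange inequality on $\SL_n$ characters, and then prove each such exchange by a Littlewood--Richardson injection in the spirit of the proof of Theorem~\ref{main:first row}.

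First I would make the hypothesis explicit. An alcoved polytope of type $A_{n-1}$ is cut out by inequalities of the form $c_{ij}\le x_i-x_j\le d_{ij}$, so the minimal one containing $\rho$ and $\delta$ is given by $c_{ij}=\min(\rho_i-\rho_j,\delta_i-\delta_j)$ and $d_{ij}=\max(\rho_i-\rho_j,\delta_i-\delta_j)$. Under the constraint $\mu+\nu=\rho+\delta$, asking both $\mu$ and $\nu$ to lie in this polytope reduces to a single family of inequalities on $\mu$, since $\nu_i-\nu_j=c_{ij}+d_{ij}-(\mu_i-\mu_j)$ lies in $[c_{ij},d_{ij}]$ iff $\mu_i-\mu_j$ does. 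As a consistency check, I would verify that these inequalities imply the necessary conditions of McNamara ($\mu\cup\nu\preceq\rho\cup\delta$, via an argument analogous to Lemma~\ref{lem:aux necessary} and Proposition~\ref{p:McNamara's}) and of Guti\'errez--Rosas on outer-corner sumsets (Proposition~\ref{p:Gut-Rosas'}); a failure of either check would refute the conjecture outright.

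Next, I would connect $(\rho,\delta)$ to $(\mu,\nu)$ within the polytope through a sequence of elementary exchanges $(\mu',\nu')\mapsto(\mu'+e_i-e_j,\,\nu'-e_i+e_j)$, each move staying inside. The existence of such a lattice path of pairs follows from the unimodular structure of alcoved polytopes developed in~\cite{DobrovolskaPylyavskyy}. The conjecture then reduces to the local statement $\chi_{\mu+e_i-e_j}\,\chi_{\nu-e_i+e_j}\ge_s \chi_\mu\,\chi_\nu$ for every legal exchange, which would in turn follow from an injection $\LR^\theta_{\mu,\nu}\hookrightarrow\LR^\theta_{\mu+e_i-e_j,\,\nu-e_i+e_j}$ for each $\theta$.

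\textbf{Main obstacle.} The elementary exchange lemma is the crux, and I expect it to be essentially as hard as the conjecture itself. The subtle point is that the analogous $\GL_n$ statement $s_{\mu+e_i-e_j}\,s_{\nu-e_i+e_j}\ge_s s_\mu\, s_\nu$ in $\Lambda[x_1,\ldots,x_n]$ can fail: positivity is only recovered after passing to the quotient by $x_1\cdots x_n-1$, which identifies $\chi_\lambda$ with $\chi_{\lambda+(1^n)}$ and collapses Schur-negative contributions. Consequently the sought bijection cannot be purely local on LR-tableaux; it must be ``$\SL_n$-aware'', mediating those cancellations---perhaps via promotion/evacuation on rectangular tableaux, a Gelfand--Tsetlin pattern reformulation, or a cluster-algebraic mechanism that manifestly respects the quotient. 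Constructing such a bijection, rather than a naive LR injection, is the bottleneck that has, to my knowledge, kept this conjecture open.
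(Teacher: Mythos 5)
This statement is not proved in the paper at all: it is recorded in the concluding remarks as an unpublished conjecture of Lam, Postnikov and Pylyavskyy, and the authors offer no argument for it, only the observation that a symmetric-function analogue would imply the $\beta=\varnothing$ case of Conjecture~\ref{main conjecture}. So there is no proof for your proposal to be measured against, and your proposal does not close the gap either. What you have written is a reduction scheme whose pivotal step is left unestablished: the ``elementary exchange'' inequality $\chi_{\mu+e_i-e_j}\,\chi_{\nu-e_i+e_j}\ge_s\chi_\mu\,\chi_\nu$ is exactly the hard content of the conjecture (it is the special case where the two pairs differ by a single root move), and you concede it is ``essentially as hard as the conjecture itself.'' Observing that the $\GL_n$ analogue $s_{\mu+e_i-e_j}s_{\nu-e_i+e_j}\ge_s s_\mu s_\nu$ can fail, so that any LR-type injection must somehow exploit the quotient by $x_1\cdots x_n-1$, is a useful diagnosis but not a construction; nothing in the paper's proof of Theorem~\ref{main:first row} (a genuine LR injection, available there only because of the strong hypotheses $k\ge\lambda_2$, $j\ge\lambda_2'$) suggests how to build the ``$\SL_n$-aware'' map you would need.

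There is also a secondary gap in the reduction itself. To run the telescoping argument you need a chain of exchanges from $(\rho,\delta)$ to $(\mu,\nu)$ in which every intermediate pair consists of dominant weights, stays inside the minimal alcoved polytope, and moves in the direction that (conjecturally) increases the product in the $\ge_s$ order; citing the unimodular triangulation of alcoved polytopes from \cite{DobrovolskaPylyavskyy} does not by itself produce such a path, and whether each step is a ``legal'' positivity-increasing exchange is again an instance of the statement being proved. Finally, your consistency checks against McNamara's dominance condition and the outer-corner condition of Proposition~\ref{p:Gut-Rosas' condition} are only necessary conditions, so passing them lends plausibility but carries no probative weight. In short: the statement remains open, and the proposal, as you yourself flag, stops at the step where the real difficulty begins.
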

The corresponding conjecture for symmetric functions implies the following:
\begin{conjecture}
    For all partitions $\lambda$ and all integral vectors $\alpha$, the sequence
    \[
    s_{\lambda}, ~~
    s_{\lambda+\alpha}, ~~
    s_{\lambda+2\alpha}, ~~
    s_{\lambda+3\alpha}, ~~\ldots
    \]
    is strongly Schur log-concave. 
\end{conjecture}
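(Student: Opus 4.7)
The plan is to follow the blueprint of the proof of Theorem~\ref{main:first row}. Set $\mu = \lambda+n\alpha$, $\nu = \lambda+(n+i)\alpha$, $\rho = \lambda+(n-1)\alpha$, $\delta = \lambda+(n+i+1)\alpha$; by the Littlewood--Richardson rule, strong Schur log-concavity reduces to producing, for every partition $\theta$, an injection $\LR^\theta_{\rho,\delta} \hookrightarrow \LR^\theta_{\mu,\nu}$. The guiding observation is that $\mu-\rho = \alpha = \delta-\nu$, so the injection should morally transfer the cells recording~$\alpha$ from the skew shape of the second tableau to the inner partition of the first.

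First I would try to reduce to the case where $\alpha$ is a partition. Writing $\alpha = \alpha^+ - \alpha^-$ with $\alpha^\pm$ partitions, the involution $\omega$ from the list following Lemma~\ref{lem:homo} interchanges the roles of rows and columns, so one can hope to push the negative contribution onto the transposed picture and assume $\alpha \ge 0$ componentwise. Assuming this, the proposed injection proceeds row by row: for each $i$ with $\alpha_i > 0$, slide the leftmost $\alpha_i$ cells of row $i$ of the skew shape $\theta/\rho$ into the inner partition, thereby turning $\rho$ into $\mu$. For this to land in $\LR^\theta_{\mu,\nu}$, the content carried by the removed cells must total exactly $\alpha_1$ ones, $\alpha_2$ twos, and so on, which is precisely the difference between the old and new contents, $\delta-\nu = \alpha$. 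One then checks, as in the proof of Theorem~\ref{main:first row}, that the resulting filling remains semistandard and Yamanouchi.

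The main obstacle is that, for a general Littlewood--Richardson tableau, the leftmost $\alpha_i$ cells of row $i$ need not contain precisely $\alpha_i$ copies of the letter $i$; in fact the leftmost cell of row $i$ can contain any $j \le i$. Correcting this seems to require either a jeu-de-taquin style rearrangement to concentrate the desired entries at the left, or a more conceptual bijection driven by crystal or promotion operators. A second difficulty is that the reduction to $\alpha \ge 0$ via $\omega$ is not lossless when $\alpha$ has mixed signs, since adding a row of positive sign and removing a column of negative sign interact nontrivially. Since the present conjecture is a consequence of the unpublished Lam--Postnikov--Pylyavskyy alcoved-polytope conjecture for symmetric functions (itself widely open), any complete proof would likely resolve a substantial piece of that problem; a realistic intermediate target is the case $\ell(\alpha) = 2$, which is combinatorially tractable yet already displays the essential mixed-sign behaviour.
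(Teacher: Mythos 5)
This statement appears in the paper's concluding remarks as a \emph{conjecture}, not a theorem: the authors present it as a consequence of the (unproven, unpublished) Lam--Postnikov--Pylyavskyy alcoved-polytope conjecture for symmetric functions and offer no proof of their own. So there is no proof in the paper to compare against, and you should not present your sketch as if it closes the question.

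That said, your appraisal of the difficulties is largely sound, and worth making more precise. Your first gap is real: for a general Littlewood--Richardson tableau of shape $\theta/\rho$, the leftmost $\alpha_i$ cells of row $i$ are not forced to carry the content that the shift by $\alpha$ would require, so the naive cell-sliding injection $\LR^\theta_{\rho,\delta}\hookrightarrow\LR^\theta_{\mu,\nu}$ is not well-defined. This is precisely what the hypotheses $k \ge \lambda_2$ and $j \ge \lambda'_2$ buy in Theorem~\ref{main:first row}: when $\ell(\alpha)\le 1$, the added cells sit strictly to the right (resp.\ below) of everything else, the entries placed there are forced, and semistandardness and the Yamanouchi condition become easy to verify. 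Your second gap is also real, but for a sharper reason than you state: the reduction to $\alpha\ge 0$ via $\omega$ fails because the transpose of $\lambda+n\alpha$ is generally \emph{not} of the form $\lambda'+n\alpha'$ for any fixed $\alpha'$ (the differences of consecutive transposed partitions vary with~$n$), so after applying $\omega$ the sequence leaves the class the conjecture is about. The only situation where the $\omega$ trick genuinely converts row-growth into column-growth is $\ell(\alpha)=1$, where $(\lambda+(nk))'=\lambda'\cup(1^{nk})$, which is why the paper formulates Theorem~\ref{main:first row} with both a first-row and a first-column parameter and treats them symmetrically. Your final observation, that $\ell(\alpha)=2$ is a natural intermediate target and that a full proof would settle a substantial part of the Lam--Postnikov--Pylyavskyy conjecture, agrees with the paper's positioning of the problem. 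In short: the proposal is a reasonable research sketch, but it is not a proof, and the statement it addresses is not proven in the paper either.
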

This is the case $\beta=\varnothing$ and $\alpha\in\ZZ^n$ (without restriction on the length of $\alpha$) of Conjecture \ref{main conjecture}. We ask, more generally:
\begin{prob}
    Under which conditions on $\lambda, \beta, \alpha$ is the sequence
    \[
    s_{\lambda},~~
    s_{\lambda\cup\beta+\alpha},~~
    s_{\lambda\cup^2\beta+2\alpha}, ~~
    s_{\lambda\cup^3\beta+3\alpha}, ~~\ldots
    \]
    strongly Schur log-concave?
\end{prob}

To illustrate the subtleties in the problem, we provide a counterexample to a relaxed version of Conjecture \ref{main conjecture}.
\begin{ejemplo}
    Let $\lambda = (3,3)$, $\beta = (3)$, $\alpha = (1,1)$. 
    Note that $\ell(\alpha) = \ell(\lambda)$. The sequence
    \[
    s_{(3^2)}, ~~
    s_{(4^2,3)}, ~~
    s_{(5^2,3^2)}, ~~
    s_{(6^2,3^3)}, ~~\ldots
    \]
    is not Schur log-concave, since $s_{(4^2,3)}^2 \not\ge_s s_{(3^2)}s_{(5^2,3^2)}$.
\end{ejemplo}

\section*{Acknowledgements}
The first author would like to thank Mark Wildon for his guidance. We thank Micha{\l} Szwej and \'Alvaro Mart\'inez for helping shape the statement of the problem;
Bruce Sagan for interesting comments.

\'AG was funded by a University of Bristol Research Training Support Grant.
CK~was partially supported by the Austrian
Science Foundation FWF, grant\break 10.55776/F1002, in the framework
of the Special Research Program ``Discrete Random Structures:
Enumeration and Scaling Limits''.

\bibliographystyle{alpha}
\bibliography{qPascalDiagonal/Bib2}

\end{document}